\documentclass[12pt,a4paper]{article}
\pdfoutput=1


\usepackage{hyperref}
\usepackage[english]{babel} 
\usepackage{amsthm}
\usepackage{amsmath}
\usepackage{amsfonts}
\usepackage{amssymb}
\usepackage{enumerate}
\usepackage{amscd}
\usepackage{graphicx}
\usepackage[latin1]{inputenc}  
\usepackage{makeidx}
\usepackage[all]{xy}

\makeindex

\setcounter{section}{0}
\setcounter{subsection}{0}
\newcounter{Def}[section]

\pagestyle{headings}
\setlength{\oddsidemargin}{-0.3 cm}
\setlength{\evensidemargin}{-0.3 cm}
\setlength{\textwidth}{16.5cm}
\setlength{\textheight}{24 cm}
\setlength{\topmargin}{-1 cm}
\setlength{\parindent}{20 pt} 

\newtheoremstyle{stil}
{10pt}
{10pt}
{}
{}
{\bf}
{}
{.5em}
{}

\theoremstyle{stil}
\newtheorem{Proposition}[Def]{Proposition}

\newtheorem{Example}[Def]{Example}

\newtheorem{Remark}[Def]{Remark}

\newtheorem{Lemma}[Def]{Lemma}
\newtheorem{Theorem}[Def] {Theorem}

\newtheorem{Definition}[Def]{Definition}
\newtheorem{Def/Not}[Def]{Definition/Notation}


\def\ad         {{\mathrm{ad}}}
\def\Aut        {\mathrm{Aut}}

\def\coev       {{\mathrm{coev}}}

\def\END        {\mathcal{E}\mathrm{nd}}
\def\ev         {{\mathrm{ev}}}

\def\id         {{\mathrm{id}}}
\def\Id         {\textnormal{Id}}

\def\Ob         {\mathrm{Ob}}

\def\tev        {{\widetilde{\mathrm{ev}}}}
\def\tcoev      {{\widetilde{\mathrm{coev}}}}

\def\YD         {{\mathcal{YD}}}


\def\k          {\Bbbk}

\def\unit       {\mathbf{1}}


\def\cC       {{\mathcal C}}
\def\cD       {{\mathcal D}}
\def\cE       {{\mathcal E}}
\def\cH       {{\mathcal H}}

\def\cS       {{\mathcal S}}

\def\cZ       {{\mathcal Z}}
\def\eps		{\varepsilon}  
\newcommand{\mz}{{(-2)}}
\newcommand{\me}{{(-1)}}
\newcommand{\0}{{(0)}}
\newcommand{\pe}{{(1)}}
\newcommand{\pz}{{(2)}}
\newcommand{\pd}{{(3)}}

\newcommand{\du}{{^{\vee}}}
\newcommand{\iv}{^{-1}}

\newcommand{\op}{{\mathrm{op}}}

\newcommand{\abs}[1]{\left\vert #1 \right\vert}
\newcommand{\menge}[1]{\left\{ #1 \right\}}
\newcommand{\ov}[1]{\overline{#1}}
\newcommand{\un}[1]{\underline{#1}}

\def\defi       {{\mathrm{def.}}}
\newcommand{\glei}[1]{\stackrel{#1}{=}}

\newcommand{\textpic}[1]{\raisebox{-.5\totalheight}{\includegraphics{#1}}}



\begin{document}
\thispagestyle{empty}
\begin{flushright}
    {\sf ZMP-HH/13-5}\\
    {\sf Hamburger$\;$Beitr\"age$\;$zur$\;$Mathematik$\;$Nr.$\;$472}\\[2mm]
    May 2013
\end{flushright}
\vskip 1.5em
\begin{center}\Large
Equivariant categories from categorical group actions on monoidal categories
\end{center}\vskip 1.2em
\begin{center}
Alexander Barvels
\end{center}

\vskip 2mm

\begin{center}\it
   Fachbereich Mathematik, \ Universit\"at Hamburg\\
   Bereich Algebra und Zahlentheorie\\
   Bundesstra\ss e 55, \ D\,--\,20\,146\, Hamburg  
\end{center}
\vskip 2em
\begin{abstract}
$G$-equivariant modular categories provide the input for a standard method to construct 3d homotopy field theories.
Virelizier constructed a $G$-equivariant category from the action of a group $G$ on a Hopf algebra $H$ by Hopf algebra automorphisms.
The neutral component of his category is the Drinfeld center of the category of $H$-modules.\\
We generalize this construction to weak actions of a group $G$ on an arbitrary monoidal category $\cC$ by (possibly non-strict) monoidal auto-equivalences and obtain a $G$-e\-qui\-va\-ri\-ant category with
neutral component the Drinfeld center of $\cC$.
\end{abstract}

\section{Introduction}
Let $G$ be a group with neutral element $e \in G$. A \emph{$G$-equivariant} or \emph{$G$-crossed} category is, roughly speaking, a $G$-graded monoidal category $\cC = \coprod_{g \in G} \cC_g$ together with auto-equivalences $\varphi_g : \cC \rightarrow \cC$ such that $\varphi_g(\cC_h) \subset \cC_{ghg\iv}$ and $\varphi_g \circ \varphi_h \simeq \varphi_{gh}$ as monoidal functors for all $g,h \in G$. 
The category $\cC_e$ is called the neutral component of $\cC$ and is a monoidal subcategory of $\cC$. 
For detailed definitions see Section \ref{preliminary_section}.\\
A \emph{$G$-braiding} or simply \emph{braiding} for a $G$-crossed category $\cC$ is a family of isomorphisms $c_{X,Y} : X \otimes Y \rightarrow \varphi_g(Y) \otimes X$ in $\cC$, natural in $X \in \Ob(\cC_g)$ and $Y \in \Ob(\cC)$, that fulfills the coherences explained in Section \ref{braiding_section}. 
A $G$-crossed category together with a braiding is called \emph{$G$-braided category}.\\[3pt]
A $d$-dimensional homotopy field theory produce invariants of homotopy classes of maps $f : M \rightarrow X$, where $M$ is a $d$-dimensional smooth and closed manifold and $X$ a CW-complex.
If $X$ is an Eilenberg-MacLane space $K(G,1)$ and $M$ is 3-dimensional, then $G$-braided categories that fulfill a non-degeneracy condition for the braiding play an important role in the construction of such homotopy field theories \cite{tur10}. For the trivial group $G= \menge{e}$ homotopy field theories are nothing but ordinary topological field theories.
$G$-braided categories also arise as categories of twisted modules over a vertex operator algebra with finite automorphism group $G$ \cite{Kir04}.\\[5pt]
In this article we describe a method to construct a $G$-braided category from a monoidal category $\cC$ and an arbitrary group $G$ acting on $\cC$ via a monoidal auto-equivalence $\varphi_g : \cC \rightarrow \cC$ for each $g \in G$. 
More precisely an action of $G$ on $\cC$ is a strong monoidal functor $\varphi : \un{G} \rightarrow \END^\otimes (\cC)$, where $\un{G}$ is the discrete monoidal category associated to $G$ and $\END^\otimes (\cC)$ the monoidal category of comonoidal endofunctors of $\cC$. A comonoidal structure on a functor $F : \cC \rightarrow \cC$ is in particular a natural transformation $F^2 : F \circ \otimes \rightarrow F \otimes F$ which need not be an isomorphism (see Section \ref{functor_section}).\\
For each functor $\varphi_g$ we define a category $\cZ_g(\cC)$, where $\cZ_e(\cC)$ will be the Drinfeld center of the category $\cC$. 
The disjoint union $\cZ_G(\cC) := \coprod_{g \in G} \cZ_g(\cC)$ is a $G$-graded monoidal category which is also equipped with an action $\Phi$ of $G$ that fulfills the crossing property, i.e. $\Phi_g(\cZ_h(\cC)) \subset \cZ_{ghg\iv}(\cC)$ for all $g,h \in G$. Furthermore the category $\cZ_G(\cC)$ has a $G$-braiding, so we get a $G$-braided category with neutral component $\cZ(\cC)$.
The structural components of this category are summarized in Theorem \ref{main_result}.\\[5pt]
Let us discuss an example:
A source for monoidal categories is provided by the categories of modules over a Hopf algebra.
A $G$-action on the category $\cC = H\text{-mod}$ is for example induced by a homomorphism from the group $G$ to the group $\Aut_{\rm Hopf}(H)$ of Hopf algebra automorphisms of $H$.
Group actions on $H$-mod given by such a homomorphism are by strict monoidal functors that compose strictly. We will deal with more general actions on $H$-mod coming from, what we call, comonoidal bialgebra automorphisms and gauge transformations.
We briefly describe the role of comonoidal automorphisms and gauge transformations for the action of $G$ on $H$-mod in categorical language, for details we refer to the examples in section \ref{preliminary_section}:\\
A comonoidal bialgebra automorphism $f: H \rightarrow H$ is an algebra automorphism of $H$ that does not necessarily commute with the comultiplication $\Delta$, 
but the compatibility of $f$ and $\Delta$ is controlled by an invertible element $f^\pz \in H \otimes H$, called comonoidal structure on $f$. 
This element defines a strong comonoidal structure on the usual pull-back functor $f^* : H\text{-mod} \rightarrow H\text{-mod}$ which may be non-isomorphic to a strict monoidal functor.\\
A gauge transformation between two comonoidal automorphisms $f$ and $g$ is an invertible element $a\in H$ that fulfills conditions, 
which ensure that we can define a comonoidal transformation between the two comonoidal pull-back functors $f^*$ and $g^*$ by acting with $a$. 
Comonoidal bialgebra homomorphisms and gauge-transformations allow us to define an action of the group $G$ on the category $\cC = H\text{-mod}$, such that the endofunctors $\varphi_g : \cC \rightarrow \cC$ are possibly neither strict, 
nor compose strictly, i.e. the functors $\varphi_g \circ \varphi_h$ and $\varphi_{gh}$ can differ by a comonoidal isomorphism.\\
Such group actions naturally arise.
An example of a $G$-action on a monoidal category that does not come from a group homomorphism $G \rightarrow \Aut_{\rm{Hopf}}(H)$ is investigated in \cite{MNS2011b}:
Given an exact sequence of groups $1 \rightarrow A \stackrel{i}{\rightarrow} B \stackrel{\pi}{\rightarrow} G \rightarrow 1$ and a set theoretic section $s : G \rightarrow B$.
Identify $A$ with the normal subgroup $i(A)$ of $B$.
Let $H = \k[A]$, the group algebra of $A$ with its usual Hopf algebra structure. 
For all $g\in G$ one gets Hopf algebra automorphisms $\varphi_g : H \rightarrow H$ given by $\varphi_g := \ad_{s(g)}$. 
The automorphisms $\varphi_g \circ \varphi_h$ and $\varphi_{gh}$ differ by the inner automorphism $\ad_{c_{g,h}}$ with $c_{g,h} := s(g)\iv s(h)\iv s(gh)$, 
namely $\varphi_g \circ \varphi_h = \varphi_{gh} \circ \ad_{c_{g,h}}$.
Note that $c_{g,h}$ differs from $e \in A$, unless $s$ is a group homomorphism. 
These elements give non-trivial monoidal transformations between the strict monoidal pull-back functors $(\varphi_h \circ \varphi_g)^*$ and $\varphi_{gh}^*$.\\[5pt]
We now return to the case where $H$ is an arbitrary Hopf algebra. For a comonoidal bialgebra automorphism $(f,f^\pz)$ we introduce the notion of $f$-Yetter-Drinfeld modules, which specializes to ordinary Yetter-Drinfeld modules if $(f,f^\pz) = (\id, 1_H \otimes 1_H)$.
The category of $f$-Yetter-Drinfeld modules realizes the category $\cZ_G(H\text{-mod})$ in Hopf-algebraic terms.\\[5pt]
In the case of a bialgebra automorphism $f$, seen as a comonoidal automorphism with trivial comonoidal structure $1_H \otimes 1_H$, we show in Proposition \ref{Vir_prop} 
that the category of $f$-Yetter-Drinfeld modules
is isomorphic to the representations of an algebra defined by Virelizier in \cite{Vir2003}. 
In Theorem \ref{Vir_theorem} this isomorphism of categories helps us to see how our categorical construction generalizes the Hopf-algebraic construction of Virelizier:\\
In \cite{Vir2003} the author started with a group $G$, a Hopf algebra $H$ and a group homomorphism $\phi : G \rightarrow \Aut_{\rm{Hopf}}(H)$.
As mentioned before, this induces an action of $G$ on the category $\cC = H\text{-mod}$ by strict endofunctors which compose strictly. 
From these data, Virelizier constructs a quasi-triangular $G$-Hopf coalgebra $D(H,\phi)$ as defined in \cite[Chapter VIII]{tur10}.\\
In \cite{tur10} a $G$-braided category is defined for every quasi-triangular Hopf $G$-coalgebra $\cH$, namely the category ${\rm Rep}(\cH)$ of representations of $\cH$.
The category ${\rm Rep}(D(H,\phi))$ is a $G$-braided category with neutral component $D(H)\text{-mod}$, the modules over the Drinfeld double of $H$. 
$D(H)$ is a Hopf algebra whose category of modules is known to be isomorphic to the Drinfeld center of the category of $H$-modules.\\[5pt]
There are two other constructions that produce $G$-braided categories and which we want to relate to our construction. 
The first one is the $G$-graded center as discussed in \cite{GNN} resp.\! \cite{TuVi2012} and the second one is a construction by Zunino \cite{Zun2004}.\\
In contract to our construction which starts with a monoidal category equipped with a $G$-action, the $G$-graded center takes a $G$-graded category $\cC$ without $G$-action. 
The $G$-center produces from $\cC$ a $G$-braided category $\cZ^G(\cC)$ with neutral component $\cZ(\cC)$.\\
Zunino's category takes as input a $G$-crossed category $\cC$ and produces a $G$-braided category $\cZ$ with neutral component $\cZ(\cC_e)$. \\
Since every monoidal category with $G$-action can be seen as $G$-crossed category concentrated in degree $e \in G$, one might ask, 
whether our construction can be seen as a special case of Zunino's category.
In Remark \ref{Zunino_remark} we discuss why this is not the case.\\[5pt]
This article is organized as follows: In Section \ref{preliminary_section} we collect definitions and basic facts about monoidal functors and 
$G$-braided categories we are going to use in the main text. 
In Section \ref{half-braiding_section} we describe the homogeneous components of $\cZ_G(\cC)$ and how one obtains an action of $G$ on $\cZ_G(\cC)$ from the action on $\cC$ we start with.
In Section \ref{center_section} the full $G$-braided structure of $\cZ_G(\cC)$ is described and in Section \ref{Hopf-algebra-case} we discuss the example of $\cC = H\text{-mod}$ for some Hopf algebra
and the relation of Yetter-Drinfeld modules and the $G$-Hopf coalgebra of Virelizier.
\paragraph{Acknowledgments.} I thank Jennifer Maier and Christoph Schweigert for useful discussions and comments. The author is supported by the the Research Training Group 1670 "Mathematics Inspired by String Theory and Quantum Field Theory".
\section{Preliminaries}
\label{preliminary_section}
We fix our notations in this section and recall basic facts we need in the following. If not mentioned otherwise we consider small categories. 
The objects of $\cC$ are denoted by $\Ob(\cC)$, the identity morphism of $X \in \Ob(\cC)$ will be denoted by either $\id_X, X$ or simply $\id$.
We assume that the reader is familiar with the definition of a monoidal category and MacLanes coherence theorem which can be interpreted as 'every monoidal category is monoidal equivalent to a strict monoidal category'. 
We will use this to simplify formulas in the following by assuming that the monoidal categories we work with are strict unless stated otherwise, i.e. associators and unit isomorphisms are identities.
We also assume that the reader is familiar with the notion of a Hopf-algebra over a field $\k$, modules, comodules and the Sweedler notations $\Delta(a) = a_{(1)} \otimes a_{(2)}$ for the coproduct and $\delta(x) = x_{(-1)} \otimes x_{(0)}$ for a left coaction of a comodule. Although the monoidal category of vector spaces resp.\! of modules over a Hopf algebra is not strict, we will omit the explicit insertion of the associators and unit constrains, as usual in the literature.
\subsection{(Co)Monoidal functors and transformations}
\label{functor_section}
Now let $\cC$ and $\cD$ be monoidal categories and $F : \cC \rightarrow \cD$ a functor.
\paragraph{(Co)Monoidal structures}
A \emph{monoidal structure} on $F$ is a pair $(F_2,F_0)$ where  $F_0 : \unit_{\cD} \rightarrow F\unit_{\cC}$ is a morphism in $\cD$ and $F_2(X,Y) : FX \otimes FY \rightarrow F(X \otimes Y)$ is a family of morphisms in $\cD$ natural in $X,Y \in \Ob(\cC)$, such that for all $X,Y,Z \in \Ob(\cC)$ the following equalities of morphisms hold:
\begin{align*}
F_2(X \otimes Y, Z) \circ (F_2(X,Y) \otimes Z) &= F_2(X, Y \otimes Z)\circ (X \otimes F_2(Y,Z)) \quad \text{and} \quad \\
F_2(X, \unit )\circ(FX \otimes F_0) &= FX = F_2(\unit, X) \circ (F_0 \otimes FX).
\end{align*}
A \emph{comonoidal structure} on $F$ is a pair $(F^2,F^0)$ where  $F^0 : F\unit_{\cC} \rightarrow \unit_{\cD}$ is a morphism in $\cD$ and $F^2(X,Y) : F(X \otimes Y) \rightarrow FX \otimes FY$ is a family of morphisms in $\cD$ natural in $X,Y \in \Ob(\cC)$, such that for all $X,Y,Z \in \Ob(\cC)$ the following equalities hold:
\begin{align}
\label{comonoidal_coherence_1}
(F^2(X,Y) \otimes Z)\circ F^2(X \otimes Y, Z)  &= (X \otimes F^2(Y,Z))\circ F^2(X, Y \otimes Z)  \quad \text{and} \quad \\
\label{comonoidal_coherence_2}
(FX \otimes F^0) \circ F^2(X, \unit ) &= FX = (F^0 \otimes FX)\circ F^2(\unit, X).
\end{align}
A \emph{(co)monoidal functor} from $\cC$ to $\cD$ is a functor $F : \cC \rightarrow \cD$ together with a (co)monoidal structure. 
In the literature monoidal functors are also called lax monoidal functors and comonoidal functors are called oplax monoidal.\\
We call a monoidal functor $(F,F_2,F_0)$ (a comonoidal functor $(F,F^2,F^0)$) \emph{strong}/\emph{strict} if all $F_2(X,Y)$ and $F_0$ (resp.\! $F^2(X,Y)$ and $F^0$) are isomorphisms/identities in $\cD$.
A strong monoidal functor is also strong comonoidal with comonoidal structure $F^2 := (F_2)^{-1}$ and $F^0 := (F_0)^{-1}$. Similarly a strong comonoidal functor has also a strong monoidal structure.

\begin{Remark}
\label{coalgebra_preservation}
Let $F: \cC \rightarrow \cD$ be a comonoidal functor and $(C,\Delta, \eps)$ a (coassociative, counital) coalgebra in $\cC$, i.e. $C$ is an object in $\cC$ and $\Delta : C \rightarrow C \otimes C$ and $\eps : C \rightarrow \unit$ are morphisms, that fulfill $(\Delta \otimes \id) \circ \Delta = (\id \otimes \Delta) \circ \Delta$ and $(\eps \otimes \id) \circ \Delta = \id = (\id \otimes \eps) \circ \Delta$.
It is an easy exercise that the triple $(F(C),F^2(C,C) \circ F(\Delta), F^0 \circ F(\eps))$ is a coalgebra in $\cD$.
\end{Remark}

\begin{Example}
\label{comonoidal-automoph-example}
Let $H$ be a bialgebra over $\k$. A pair $(f,f^\pz)$ consisting of an \underline{algebra} automorphism $f : H \rightarrow H$ and an invertible element $f^\pz \in H \otimes H$ is called \emph{co\-mo\-no\-i\-dal bialgebra automorphism}, if
\begin{align*}
\ad_{f^\pz} \circ \Delta \circ f &= (f \otimes f) \circ \Delta & \\
\eps \circ f &= \eps                                         & \\
(f^\pz \otimes 1_H) \cdot (\Delta \otimes \id)(f^\pz) &= (1_H \otimes f^\pz) \cdot (\id \otimes \Delta )(f^\pz) & \text{(cocycle condition)}\\
(\eps \otimes \id )(f^\pz) &= 1_H = (\id \otimes \eps )(f^\pz)                                                  & \text{(normality)}.
\end{align*}
The element $f^\pz$ is called \emph{co\-mo\-no\-i\-dal structure}.
One can choose $f^\pz = 1_H \otimes 1_H$, iff $f : H \rightarrow H$ is a bialgebra homomorphism, i.e. $(f \otimes f) \circ \Delta = \Delta \circ f$ and $\eps \circ f = \eps$.\\ 
A comonoidal bialgebra automorphism defines a strong comonoidal auto-equivalence. This auto-equivalence is the following comonoidal endofunctor $(f,f^\pz)^* := (F,F^2,F^0)$ of the category $H$-mod. 
Given an $H$-module $X$, define $F(X)$ as the $H$-module with underlying vector space $X$ and action $a.x := f(a).x$, for an $H$-linear map $f: X \rightarrow Y$ set $F(\varphi) := \varphi$. 
This is a functor, since $f$ is an algebra homomorphism. It is also called pullback or restriction along $f$ and often denoted by $f^*$.
The isomorphisms $F^2(X,Y) : F(X \otimes Y) \rightarrow FX \otimes FY$ are given by $x \otimes y \mapsto f^\pz.(x \otimes y)$ and 
$F^0 : F\k \rightarrow \k$ is the identity of $\k$.
\end{Example}
\begin{Remark}
Our notion of comonoidal bialgebra automorphism is similar to the definition of twisted bialgebra automorphism in \cite{Dav07}.
There the 'cocycle condition' is $(f^\pz \otimes 1_H) \cdot (\Delta \otimes \id)(f^\pz) =  (\id \otimes \Delta )(f^\pz) \cdot (1_H \otimes f^\pz)$.
This condition allows one to define another coproduct $\Delta_{f^\pz} = \ad_{f\pz} \circ \Delta$ which gives, together with the multiplication of $H$, 
another bialgebra structure on $H$.
\end{Remark}

\paragraph{(Co)Monoidal transformations}
Let $F,G : \cC \rightarrow \cD$ be two functors together with monoidal structures $(F_2,F_0)$ resp.\! $(G_2,G_0)$. A natural transformation $\alpha : F \rightarrow G$ is called \emph{monoidal transformation}, if for all $X,Y \in \Ob(\cC)$ we have
\begin{align*}
G_2(X,Y) \circ (\alpha_X \otimes \alpha_Y ) = \alpha_{X \otimes Y} \circ F_2(X,Y) \quad \text{and} \quad \alpha_\unit \circ F_0 = G_0.
\end{align*}
A transformation between comonoidal functors $F$ and $G$ is called \emph{comonoidal transformation} if for all $X,Y \in \Ob(\cC)$
\begin{align*}
(\alpha_X \otimes \alpha_Y ) \circ F^2(X,Y) = G^2(X,Y) \circ \alpha_{X \otimes Y} \quad \text{and} \quad G^0 \circ \alpha_\unit = F^0.
\end{align*}

\begin{Example}
\label{gauge-transformation-example}
Let $(f,f^\pz)$ and $(g,g^\pz)$ be comonoidal automorphisms of a bialgebra $H$. An invertible element $a \in H$ which fulfills
\begin{align}
\label{H-lin_gauge}
\ad_a \circ f &= g \\
\label{comon_gauge}
g^\pz \cdot \Delta(a) &= (a \otimes a) \cdot f^\pz
\end{align}
is called a \emph{gauge transformation} $a : (f,f^\pz) \rightarrow (g,g^\pz)$. 
Consider the strong comonoidal functors $F,G : H\text{-mod} \rightarrow H\text{-mod}$ with $F = (f,f^\pz)^*$ and $G=(g,g^\pz)^*$ 
as in Example \ref{comonoidal-automoph-example}.\\
The $\k$-linear maps $\alpha_X : FX \rightarrow GX, x \mapsto a.x$ are $H$-linear due to (\ref{H-lin_gauge}).
For an $H$-linear map $\varphi: X \rightarrow Y$ we have $\varphi \circ \alpha_X = \alpha_Y \circ \varphi$ thus $\alpha$ defines a natural isomorphism from $F$ to $G$ which is comonidal due to (\ref{comon_gauge}).
\end{Example}
\paragraph{Composition of (co)monoidal functors}
The composition of two monoidal functors $(F,F_2,F_0) : \cC \rightarrow \cD$ and $(G,G_2,G_0) : \cD \rightarrow \cE$ is defined by
$(G \circ F, (GF)_2, (GF)_0)$ with 
\begin{align*}
(GF)_2(X,Y) := G(F_2(X,Y)) \circ G_2(FX,FY)\quad \text{and} \quad (GF)_0 := G(F_0) \circ G_0.
\end{align*}
The composition of two comonoidal functors $(F,F^2,F^0) : \cC \rightarrow \cD$ and $(G,G^2,G^0) : \cD \rightarrow \cE$ is defined by
$(G \circ F, (GF)^2, (GF)^0)$ with 
\begin{align*}
(GF)^2(X,Y) := G^2(FX,FY) \circ G(F^2(X,Y))\quad \text{and} \quad (GF)^0 := G^0\circ G(F^0).
\end{align*} 
The composition of two (co)monoidal functors is again (co)monoidal.

\begin{Example}
\label{composition_example}
If $(f, f^\pz)$ and $(g,g^\pz)$ are two comonoidal automorphisms of $H$ we define their composition $(f,f^\pz) \star (g,g^\pz) := (g \circ f, (g \otimes g)(f^\pz) \cdot g^\pz)$.
It is straightforward to see that this is again a comonoidal automorphism of $H$. Note that the composition of the maps $f$ and $g$ is in reversed order.
Why we consider this composition will be apparent from the next observation:\\
Consider the functors $(f,f^\pz)^*$ and $(g,g^\pz)^*$ as defined in Example \ref{comonoidal-automoph-example}.
Then we have the following equality of comonoidal functors $(f,f^\pz)^* \circ (g,g^\pz)^* = ((f,f^\pz) \star (g,g^\pz))^*$.
\end{Example}
\paragraph{Compositions of (co)monoidal transformations}
Let $F,G,H : \cC \rightarrow \cD$ and $K,L : \cD \rightarrow \cE$ be functors and $\alpha : F \rightarrow G$ and $\beta : G \rightarrow H$ and $\gamma : K \rightarrow L$ be natural transformations. The vertical composition $\beta \bullet \alpha$ is defined as the family 
\[
(\beta \bullet \alpha)_X := \beta_{X} \circ \alpha_X : FX \rightarrow HX
\] 
of morphisms in $\cD$. 
It is a natural transformation $F \rightarrow H$. If $\alpha$ and $\beta$ are (co)monoidal then $\beta \bullet \alpha$ is as well.\\
The horizontal composition $\gamma \circ \alpha$ is defined as the family 
\[
(\gamma \circ \alpha)_X := \gamma_{GX} \circ K\alpha_X = L\alpha_X \circ \gamma_{FX} : KFX \rightarrow LGX.
\]
It is a natural transformation $KF \rightarrow LG$ and if $\alpha$ and $\gamma$ are (co)monoidal then $\gamma \circ \alpha$ is as well.\\
For a small category $\cC$ we get the strict monoidal category $\END_{\otimes}(\cC)$ of monoidal endofunctors and monoidal transformations. We also get the strict monoidal category
$\END^{\otimes}(\cC)$ of comonoidal functors and comonoidal transformations.\\
The objects in these categories are (co)monoidal endofunctors of $\cC$, morphisms are (co)mo\-no\-i\-dal transformations between those functors, the composition of morphisms is given by vertical composition of natural transformations and the tensor product is given on objects by composition of (co)monoidal functors and on morphisms by horizontal composition of natural transformations.

\begin{Remark}
\label{comonoidal_functor_remark}
To deal with comonoidal functors that are applied to multiple tensor products we will introduce some more notation here: 
For a comonoidal functor $(F,F^2,F^0): \cC \rightarrow \cD$ and objects $X_1, \ldots, X_n \in \cC$ ($n \geq 3$) define recursively the morphism
$F^n(X_1, \ldots, X_n) : F(X_1 \otimes \ldots \otimes X_n) \rightarrow F(X_1) \otimes \ldots \otimes F(X_n)$ by
\[
F^n(X_1,\ldots, X_n) := (F^{n-1}(X_1,\ldots, X_{n-1}) \otimes F(X_n))F^2(X_1\otimes\ldots \otimes X_{n-1}, X_n).
\]
If we set $F^1(X) := \id_{FX}$ the coherence conditions (\ref{comonoidal_coherence_1}) and (\ref{comonoidal_coherence_2}) can be used to show inductively that every morphism $\phi : F(X_1 \otimes \ldots \otimes X_n) \rightarrow FX_1 \otimes \ldots \otimes FX_n$ that is obtained by composing and tensoring $F^0$ and instances of $F^2$ is equal to $F^n(X_1,\ldots, X_n)$.
For example $F^4(X_1,X_2,X_3, X_4)= (F^2(X_1,X_2) \otimes F^2(X_3,X_4)) \circ F^2(X_1\otimes X_2,X_3\otimes X_4)$.\\
If the above functor is strong comonoidal we write $F^{-n}(X_1,\ldots, X_n)$ for the inverse of the morphism $F^n(X_1,\ldots, X_n)$.\\
For a morphism $f : X_1 \otimes \ldots X_n \rightarrow Y_1 \otimes \ldots \otimes Y_m$ and a strong comonoidal functor $F$ we will write 
$F.f$ for the morphism $F^m(Y_1, \ldots, Y_m) \circ F(f) \circ F^{-n}(X_1, \ldots, X_n)$. 
Note that for $n,m > 0$ the morphism $F.f$ is from $F(X_1) \otimes \ldots F(X_n) \rightarrow F(Y_1) \otimes \ldots \otimes F(Y_m)$ and for $n=0$ resp.\! $m=0$ the source resp.\! target of $F.f$ is $\unit$.
\end{Remark}
\subsection{Rigid categories}
A monoidal category $\cC$ is \emph{rigid}, if every object $X$ admits a left and right dual. A \emph{left dual} for $X$ is an object $\du X$ together with two morphisms 
$\ev_X : \du X \otimes X \rightarrow \unit$ (left evaluation) and $\coev_X : \unit \rightarrow X \otimes \du X$ (left coevaluation), such that
\[
( X \otimes \ev_X ) ( \coev_X \otimes X ) = X \qquad ( \ev_X \otimes \du X ) ( \du X \otimes \coev_X ) = \du X.
\]
A \emph{right dual} for $X$ is an object $X \du$ together with two morphisms 
$\tev_X : X \otimes X \du \rightarrow \unit$ (right evaluation) and $\tcoev_X : \unit \rightarrow X \du \otimes X$ (right coevaluation) fulfilling conditions analog to those above.
\subsection{Adjoint functors and equivalences of monoidal categories}

Let $L : \cC \rightarrow \cD$ and $R : \cD \rightarrow \cC$ be functors between arbitrary categories $\cC$ and $\cD$.
Recall that $L$ is left-adjoint to $R$ ($R$ is right-adjoint to $L$) if there are natural transformations $\eta : \Id_\cC \rightarrow RL$ and $\eps : LR \rightarrow \Id_\cD$,
such that 
\begin{align*}
\eps_{LX} \circ L(\eta_X) = LX \quad \text{and} \quad R(\eps_Y) \circ \eta_{RY} = RY \quad \text{for all } X \in \Ob(\cC), Y \in \Ob(\cD).
\end{align*}
The quadruple $(L,R,\eta, \eps)$ is called an adjunction and $\eta$ and $\eps$ a called unit resp.\! counit of the adjunction.\\
An adjunction $(L,R,\eta,\eps)$ of monoidal functors between monoidal categories is called (co)\-mo\-no\-i\-dal adjunction, if $\eta$ and $\eps$ are (co)\-mo\-no\-i\-dal transformations.
We have the following Lemma summing up what we need from Chapter 3.9 in \cite{aguiar2010monoidal}.
\begin{Lemma}
\label{comonoidal_functor_lemma}
Let $L : \cC \rightarrow \cD$ and $R: \cD \rightarrow \cC$ be adjoint functors between monoidal categories.
\begin{enumerate}
\item
If $L$ and $R$ are monoidal functors and the adjunction $(L,R,\eta,\eps)$ is monoidal, then $L$ is a strong monoidal functor.
\item
If $L$ and $R$ are comonoidal functors and the adjunction $(L,R,\eta,\eps)$ is comonoidal, then $R$ is a strong comonoidal functor.
\item
If $L$ is a strong monoidal functor, then there is a unique monoidal structure on $R$ such that $(L,R,\eta,\eps)$ is a monoidal adjunction.
\item
If $R$ is a strong comonoidal functor, then there is a unique comonoidal structure on $L$ such that $(L,R,\eta,\eps)$ is a comonoidal adjunction.
\end{enumerate}
\end{Lemma}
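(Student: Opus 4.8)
The plan is to prove parts (1) and (3) by an explicit mate construction together with the triangle (zig-zag) identities, and then to deduce (2) and (4) formally by passing to the opposite monoidal categories. For part (1) I assume $L,R$ are lax monoidal and that $\eta,\eps$ are monoidal transformations, and I produce an explicit inverse to $L_2$ as the mate of $R_2$, namely the comonoidal structure
\[
L^2(X,Y) := \eps_{LX \otimes LY} \circ L\big(R_2(LX, LY) \circ (\eta_X \otimes \eta_Y)\big), \qquad L^0 := \eps_{\unit_\cD} \circ L(R_0).
\]
To show $L^2 \circ L_2 = \id$ I would first push $L_2(X,Y)$ inward using its own naturality (at the morphisms $\eta_X,\eta_Y$), then apply the monoidality of $\eps$ in the form $\eps_A \otimes \eps_B = \eps_{A \otimes B} \circ L(R_2(A,B)) \circ L_2(RA,RB)$ with $A=LX$, $B=LY$; this collapses the middle and leaves $(\eps_{LX}\circ L\eta_X) \otimes (\eps_{LY} \circ L\eta_Y)$, which is the identity by a triangle identity. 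For the reverse $L_2 \circ L^2 = \id$ I would instead slide $L_2(X,Y)$ across $\eps$ by naturality of $\eps$, then recognize $R(L_2(X,Y)) \circ R_2(LX,LY) \circ (\eta_X \otimes \eta_Y) = \eta_{X \otimes Y}$ (the monoidality of $\eta$), and finish with the other triangle identity. The unit component $L_0$ is treated identically, applying the same two moves to the unit axioms of $\eta$ and of $\eps$.

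For part (3) I assume $L$ is strong monoidal, so that $L_2,L_0$ are invertible with inverses denoted $L^2,L^0$, and I define the lax structure on $R$ as the adjoint transpose
\[
R_2(A,B) := R\big((\eps_A \otimes \eps_B) \circ L^2(RA,RB)\big) \circ \eta_{RA \otimes RB}, \qquad R_0 := R(L^0) \circ \eta_{\unit_\cC}.
\]
I would then verify the lax associativity and unit coherences for $(R_2,R_0)$ and check that $\eta,\eps$ are monoidal for these structures, using invertibility of $L_2,L_0$ throughout. Uniqueness is immediate: the monoidality equation for $\eps$ reads $\eps_A \otimes \eps_B = \eps_{A \otimes B} \circ L(R_2(A,B)) \circ L_2(RA,RB)$, and since $L_2$ is invertible this determines $\eps_{A \otimes B} \circ L(R_2(A,B))$ completely; by the uniqueness of adjoint transposes this pins down $R_2(A,B)$ as the formula above, and likewise $R_0$.

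Parts (2) and (4) I would obtain by duality rather than by repeating the computation. The adjunction $L \dashv R$ induces an adjunction $R^\op \dashv L^\op$ between the opposite monoidal categories $\cD^\op$ and $\cC^\op$, under which a comonoidal structure on a functor is the same as a monoidal structure on its opposite and a comonoidal adjunction becomes a monoidal one. Applying part (1) to $R^\op \dashv L^\op$ shows that the left adjoint $R^\op$ is strong monoidal, i.e.\ that $R$ is strong comonoidal, which is part (2); applying part (3) produces the unique comonoidal structure on $L$, which is part (4).

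The main obstacle is concentrated in the reverse identity $L_2 \circ L^2 = \id$ of part (1) and in checking the lax coherence axioms in part (3): both require using precisely the monoidality of the unit $\eta$ (as opposed to the counit $\eps$, which governs the other direction), and careful bookkeeping of the naturality squares so that the triangle identities can be applied at exactly the right stage. Once the opposite-category dictionary for comonoidal structures is set up correctly, the reductions giving (2) and (4) are routine.
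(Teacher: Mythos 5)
Your proposal is correct, but note that the paper does not actually prove this lemma at all: it is stated as a summary of results imported from Chapter 3.9 of Aguiar--Mahajan (this is Kelly's ``doctrinal adjunction''), so your write-up supplies an argument where the paper only gives a citation. What you give is exactly the standard proof of that cited result, and your key computations check out: the candidate inverse $L^2(X,Y)=\eps_{LX\otimes LY}\circ L(R_2(LX,LY)\circ(\eta_X\otimes\eta_Y))$ has the right type, the identity $L^2\circ L_2=\id$ follows from naturality of $L_2$, the monoidality of $\eps$ (with respect to the composite structure $(LR)_2=L(R_2)\circ L_2$) and one triangle identity, while $L_2\circ L^2=\id$ uses naturality of $\eps$, the monoidality of $\eta$ and the other triangle identity; the unit components work the same way. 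In part (3) your formula for $R_2$ is the correct adjoint transpose of $(\eps_A\otimes\eps_B)\circ L_2(RA,RB)^{-1}$, and the uniqueness argument via the bijectivity of transposition is right; the only thing left implicit is the routine verification of the lax coherence axioms for $(R_2,R_0)$ and the monoidality of $\eta$, which you correctly flag as bookkeeping. The reduction of (2) and (4) to (1) and (3) by passing to opposite categories is also sound, provided one uses (as you do) that $L\dashv R$ induces $R^{\op}\dashv L^{\op}$ and that oplax structures on $F$ correspond to lax structures on $F^{\op}$. In short: a correct, self-contained proof of a statement the paper outsources to the literature.
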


An equivalence of (monoidal) categories is a (monoidal) functor $F : \cC \rightarrow \cD$, such that there is a (monoidal) functor $G : \cD \rightarrow \cC$ (called the quasi-inverse functor of $F$) and two natural (monoidal) isomorphisms
$\phi : \Id_\cC \rightarrow GF$ and $\psi : FG \rightarrow \Id_\cD$. 
If $\phi$ and $\psi$ are identity transformations we say that $F$ is an isomorphism of (monoidal) categories with inverse functor $G$.
The quadruple $(F,G,\phi, \psi)$ is called adjoint (monoidal) equivalence, if it is a (monoidal) adjunction.\\
Every equivalence of categories is part of an adjoint equivalence (cf. \cite[Thm. IV.4.1]{mac1998categories}).\\
A monoidal equivalence $F$ is obviously right-adjoint (and also left-adjoint) to its quasi-inverse, thus by the lemma $F$ has to be strong monoidal. Further we can deduce from the lemma that a monoidal functor $F$ is a monoidal equivalence, if and only if the functor $F$ is an equivalence of the underlying categories and $F$ is strong monoidal.
So every monoidal equivalence is part of an adjoint monoidal equivalence.
\subsection{Gradings}
For every group $G$ there is a strict monoidal category $\un{G}$ with objects the elements of $G$, morphisms only identity morphisms and monoidal structure given by the group multiplication and the unit of $G$. Categories with identity morphisms only are also called discrete.\\
If $\cC$ is a monoidal category, then a $G$-grading is monoidal functor $\abs{\cdot} : \cC \rightarrow \un{G}$. 
In detail this means that to every object $X \in \Ob(\cC)$ we get a group element $\abs{X}$ and for two objects $X,Y \in \Ob(\cC)$ we have $\abs{X \otimes Y} = \abs{X}\cdot \abs{Y}$ (note that every functor to $\un{G}$ is strict, since it is a discrete category).\\
If $\cC$ is a $\k$-linear monoidal category 
(i.e. $\cC$ has all finite biproducts and the Hom-sets are $\k$-vector spaces, such that composition of morphisms is $\k$-linear 
and the tensor product functor $\otimes : \cC \times \cC \rightarrow \cC$ is $\k$-linear in both variables) 
a $G$-grading is a family $\menge{\cC_g}_{g \in G}$ of full $\k$-linear subcategories,
such that every object $X$ is a direct sum of objects $X_g \in \cC_g$ and for $X \in \cC_g$ and $Y \in \cC_h$ we have $X \otimes Y \in \cC_{gh}$.\\
The full subcategory $\cC_{\mathrm{hom}}:=\coprod_{g \in G}\cC_g$ is called the subcategory of homogeneous objects in $\cC$ and the map $\cC_g \ni X \mapsto g$ defines a grading $\abs{\cdot} : \cC_{\mathrm{hom}} \rightarrow \un{G}$ in the previous sense.
\subsection{Categorical group actions}
An action of a group $G$ on a monoidal category $\cC$ a strong monoidal functor $\varphi : \un{G} \rightarrow \END^{\otimes}{\cC}$. 
In detail this means that for every $g \in G$ there is a comonoidal functor $(\varphi_g, (\varphi_g)^\pz, (\varphi_g)^0)$, for every $g,h \in G$ a comonoidal isomorphism $\varphi_2(g,h)=:\varphi_{g,h} : \varphi_g \circ \varphi_h \rightarrow \varphi_{gh}$ (called \emph{compositors}) and a comonoidal isomorphism $\varphi_0 : \Id \rightarrow \varphi_e$, such that for all $g,h,k \in G$ we have
\begin{align}
\label{group_action_axioms_1}
\varphi_{gh,k} \bullet (\varphi_{g,h} \circ \varphi_k) &= \varphi_{g,hk} \bullet (\varphi_g \circ \varphi_{h,k}) \quad \text{and}\\
\label{group_action_axioms_2}
\varphi_{g,e}\bullet(\varphi_g \circ \varphi_0) &= \varphi_g = \varphi_{e,g} \bullet (\varphi_0 \circ\varphi_g).
\end{align}
Recall that by $\bullet$ resp.\! $\circ$ we denote the vertical resp.\! horizontal composition of natural transformations.
In components the equalities (\ref{group_action_axioms_1}) and (\ref{group_action_axioms_2}) take the form
\begin{align*}
\varphi_{gh,k,X} \circ \varphi_{g,h,\varphi_k(X)} &=  \varphi_{g,hk,X} \circ \varphi_g(\varphi_{h,k,X}) \quad \text{and}\\
\varphi_{g,e,X} \circ \varphi_g (\varphi_{0,X}) &= \varphi_g(X) = \varphi_{e,g,X} \circ \varphi_{0,\varphi_g(X)} \qquad X \in \Ob(X).
\end{align*}
For simplicity we will assume in the following $\varphi_0 = \id_{\Id}$, so in particular $\varphi_e = \Id_\cC$ as monoidal functor.
Under this assumption one sees immediately that every $\varphi_g$ is a comonoidal auto-equivalence of $\cC$ with quasi-inverse $\varphi_{g\iv}$ and 
thus $\varphi_g$ is in particular a strong comonoidal functor. 
So for every $g \in G$ we have an adjoint comonoidal auto-equivalence $(\varphi_g, \varphi_{g\iv}, \eta, \eps)$ of $\cC$ 
with $\eta = \varphi_{g\iv,g}^{-1}$ and $\eps = \varphi_{g,g\iv}$.\\
Alternatively one can define an action of $G$ on $\cC$ as a strong monoidal functor $\varphi : \un{G} \rightarrow \END_{\otimes}{\cC}$. Since the $\varphi_g$ are strong comonoidal functors they are also strong monoidal functors.

\begin{Example}
\label{action_by_comonoidal_automorphisms}
Given a Hopf algebra $H$. Following Examples \ref{comonoidal-automoph-example} and \ref{gauge-transformation-example} we can define an action of a group $G$ on the monoidal category $H$-mod as follows:
Choose for every $g,h \in G$ a comonoidal automorphism $(f_g,f^\pz_g)$ of $H$ and invertible elements $b_{g,h} \in H$, such that
\begin{enumerate}
\item
$b_{g,h}$ is a gauge transformation from $(f_g,f^\pz_g) \star (f_h,f^\pz_h)$ to $(f_{gh},f^\pz_{gh})$
\item
for all $g,h,k \in G$ we have $b_{gh,k} \cdot f_k(b_{g,h}) = b_{g,hk} \cdot b_{h,k}$.
\end{enumerate}
Now set $\varphi_g := (f_g,f^\pz_g)^*$ and $\varphi_{g,h,X} := (x \mapsto b_{g,h}.x)$. In this case the assignments $g \mapsto \varphi_g$ and $(g,h) \mapsto \varphi_{g,h}$ define an action of $G$ on the category $H$-mod.
\end{Example}
\subsection{Crossing and braiding}
\label{braiding_section}
Let $\cC$ be a $G$-graded monoidal category together with a $G$-action $\varphi$. Following \cite{tur10} the action $\varphi$ is said to be 
\emph{$G$-crossed} or simply \emph{crossed}, 
if $\varphi_g(\cC_h) \subset \cC_{ghg\iv}$ for all $g,h \in G$. 
A $G$-braiding for a $G$-crossed category is a family of isomorphisms $c_{X,Y} : X \otimes Y \rightarrow \varphi_{\abs{X}}(Y) \otimes X$ natural in $X$ and $Y$, such that the following diagrams commute for all $g,h,k \in G, X \in \Ob(\cC_g), Y \in \Ob(\cC_h)$ and $Z \in \Ob(\cC)$
\begin{align}
\label{hepta_1}
\begin{xy}
\xymatrix{
X \otimes Y \otimes Z \ar[rrr]^{c_{X \otimes Y,Z}} \ar[d]^{X \otimes c_{Y,Z}} & & & \varphi_{gh}(Z) \otimes X \otimes Y \\
X \otimes \varphi_h(Z) \otimes Y \ar[rrr]^{c_{X,\varphi_h(Z)} \otimes Y}      & & & \varphi_g(\varphi_h(Z)) \otimes X \otimes Y \ar[u]_{\varphi_{g,h,Z}}
}
\end{xy}
\end{align}
\begin{align}
\label{hepta_2}
\begin{xy}
\xymatrix{
X \otimes Y \otimes Z \ar[rrr]^{c_{X, Y\otimes Z}} \ar[d]^{c_{X,Y} \otimes Z} & & & \varphi_{g}(Y \otimes Z) \otimes X \ar[d]^{\varphi^2_g(Y,Z)}\\
\varphi_g(Y) \otimes X \otimes Z \ar[rrr]^{\varphi_g(Y) \otimes c_{X,Z}}      & & & \varphi_g(Y) \otimes \varphi_g(Z) \otimes X 
}
\end{xy}
\end{align}
\begin{align}
\label{action_braid}
\begin{xy}
\xymatrix{
\varphi_k(X \otimes Y) \ar[rrr]^{\varphi_k (c_{X,Y})} \ar[d]^{\varphi_k^2(X,Y)} & & & \varphi_{k}(\varphi_g(Y) \otimes X) \ar[d]^{\varphi^2_k(\varphi_g(Y),X)}\\
\varphi_k(X) \otimes \varphi_k(Y) \ar[d]^{c_{\varphi_k(X),\varphi_k(Y)}} & & & \varphi_{k}\varphi_g(Y) \otimes \varphi_k(X) \ar[d]^{\varphi_{k,g,Y} \otimes \varphi_k(X)}\\
\varphi_{kgk\iv}\varphi_k(Y) \otimes \varphi_k X \ar[rrr]^{\varphi_{kgk\iv ,k,Y} \otimes \varphi_k(X)}      & & & \varphi_{kg}(Y) \otimes \varphi_k(X)
}
\end{xy}
\end{align}
If the category $\cC$ is not strict, an appropriate insertion of associativity constraints yields instead of 
(\ref{hepta_1}) and (\ref{hepta_2}) two heptagons, which generalize the hexagon axioms for usual braided categories.\\
The third diagram (\ref{action_braid}) states compatibility of the action and the braiding. 
In particular the restriction of the action to the neutral has to be an action by braided functors.
\subsection{Graphical notation for morphisms}

We use the following notation for morphisms in a monoidal category, depicted in Figure 1 (and read from bottom to top): A morphism $f : X \rightarrow Y$ is denoted by a coupon labeled with $f$, 
the composition $g \circ f$ of $g : Y \rightarrow Z$ with $f$ is depicted by putting $g$ on top of $f$ and the tensor product $f \otimes f'$ of $f$ with $f' : X' \rightarrow Y'$ by juxtaposition. A morphism $h : X_1 \otimes \ldots \otimes X_n \rightarrow Y_1 \otimes \ldots \otimes Y_m$ is depicted with several in and outgoing strings. For a functor $F$ the morphism $F(f)$ is depicted by a gray box surrounding $f$ and the component $\alpha_X$ of a natural transformation $\alpha : F \rightarrow G$ is labeled only with $\alpha$. We also write $XY$ instead of $X \otimes Y$.

\begin{figure}[ht]
\label{morphisms}
\[
f = \textpic{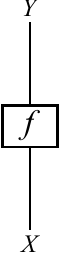} \quad , 
g \circ f = \textpic{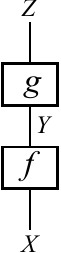} \quad , 
f \otimes f' = \textpic{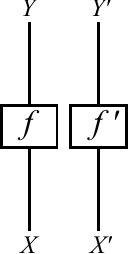} \quad , 
h = \textpic{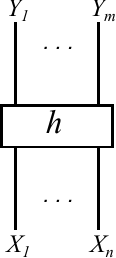} \quad , 
F(f) = \textpic{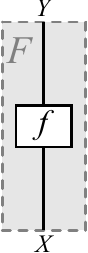} \quad ,
\alpha_X = \textpic{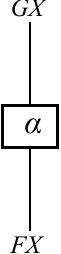} \quad .
\]
\caption[morphisms]{Graphical notation for morphisms}
\end{figure}
\section{Half braidings}
\label{half-braiding_section}
In this section we give for any comonoidal endofunctor $F$ of a monoidal category $\cC$ a usual category $\cZ^F(\cC)$.
These categories will give the building blocks for the equivariant extension of $\cZ(\cC)$.
Then we will investigate how the categories $\cZ^F(\cC)$ and adjoint functors interact, which will be useful in Section \ref{center_section}.
\subsection{The twisted sectors}
The following definition is also considered in \cite[Section 5.5]{BV2008}.
\begin{Definition}
Let $(F,F^2,F^0) : \cC \rightarrow \cC$ a comonoidal functor. A \emph{lax $F$-half-braiding on $X \in \cC$} is a family $\gamma_{X,V}^F : X \otimes V \rightarrow FV \otimes X$ of morphisms in $\cC$ natural in $V \in \Ob(\cC)$, s.t. for all objects $V,W$ in $\cC$ the following identities hold
\begin{align}
\label{half-braiding-axioms}
(F^2(V,W) \otimes X)(\gamma_{X,V \otimes W}) &= (FV \otimes \gamma_{X,W})(\gamma_{X,V} \otimes W) 
&\text{and}\quad (F^0 \otimes X) \gamma_{X,\unit} &= X,
\end{align}
or in graphical notation
\begin{align*}
\textpic{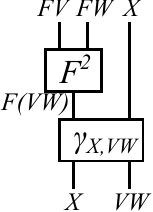} &= \textpic{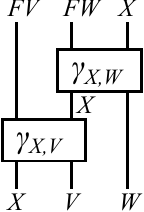} 
&\text{and}\qquad\qquad \textpic{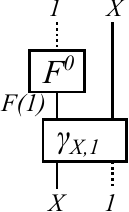} &= \textpic{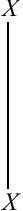} \quad .\nonumber
\end{align*}
We call an $F$-half-braiding \emph{strong}, if all $\gamma_{X,V}$ are isomorphisms.
\end{Definition}

\begin{Remark}
\label{half_braiding_remark}
\begin{enumerate}
\item
If there is a strong $F$-half-braiding on $X \in \cC$, then $F$ is a strong comonoidal functor due to (\ref{half-braiding-axioms}). 
Conversely, if $F$ is strong comonoidal and $\cC$ has right duals, then every $\gamma_{X,V}$ is invertible. The inverse is given by
\begin{align*}
(F.\tev_{V} \otimes X \otimes V) (FV \otimes \gamma_{X^\vee, V}\otimes V)(FV \otimes X \otimes \tcoev_V ).
\end{align*}
Here $F.\tev_{V}$ is the morphism $F^0 \circ \tev_V \circ F^{-2}(V, V \du)$ (cf. Remark \ref{comonoidal_functor_remark}).
\item
For every comonoidal endofunctor $F$ of $\cC$ we get a category $\cZ^F_{\mathrm{lax}}(\cC)$. Objects are pairs $(X, \gamma_X)$ where $X$ is an object in $\cC$ and $\gamma_X$ an $F$-half-braiding on $X$. Morphisms in $\cZ^F_{\mathrm{lax}}(\cC)$ are morphisms $f : X_1 \rightarrow X_2$ in $\cC$ that commute with $F$-half-braidings, i.e. the equality
\begin{align*}
(FV \otimes f) \circ \gamma_{X_1,V} = \gamma_{X_2,V} \circ (f \otimes V)
\end{align*}
holds for all $V \in \Ob(\cC)$.
\item
\label{strong_center}
The full subcategory of $\cZ^F_\mathrm{lax}$ which contains the pairs $(X,\gamma_X)$ with strong $F$-half-braiding $\gamma_X$ will be denoted by $\cZ^F(\cC)$.
For $F = \Id_{\cC}$ this category is the well-known Drinfeld center or Drinfeld double $\cZ(\cC)$ of $\cC$ which is a monoidal category (even braided).
An $\Id_\cC$-half-braiding is also called \emph{half-braiding}.\\
The category $\cZ_{\mathrm{lax}}(\cC) = \cZ_{\mathrm{lax}}^{\Id}(\cC)$ is called lax-center (cf. \cite{BrLaVi2010}). It plays a role in the investigation of bimonads that stem from a central bialgebra in a monoidal category.
\item
For an arbitrary comonoidal functor $F$ the category $\cZ^F(\cC)$ will usually not come with a monoidal structure as one can see from the next Lemma. 
In \cite{BV2008} it is shown that $\cZ^F(\cC)$ is monoidal, if $F$ is a bimonad, i.e. $F$ is a monoid in the category $\END^\otimes(\cC)$ of comonoidal functors.
\end{enumerate}
\end{Remark}

\begin{Example}
Let $\cC = H\text{-mod}$ for a Hopf algebra $H$ and $F: \cC \rightarrow \cC$ be the strong comonidal functor $(f,f^\pz)^*$ for a comonoidal algebra automorphism $(f,f^\pz)$ of $H$. In Proposition \ref{yetter_drinfeld_center_prop} we will show that $\cZ^F(\cC)$ is isomorphic to the category of $(f,f^\pz)$-Yetter-Drinfeld modules over $H$, which we will introduce in Definition \ref{twisted_Yetter_Drinfeld_defi}.
\end{Example}

Before stating the next Lemma, recall that the monoidal product in the Drinfeld double $\cZ(\cC)=\cZ^\Id(\cC)$ is given as follows:
Let $(X,\gamma_X)$ and $(Y, \gamma_Y)$ be two objects of $\cZ(\cC)$. The morphisms $\gamma_{X \otimes Y, V} := (\gamma_{X,V} \otimes Y)(X \otimes \gamma_{Y,V})$ define a half-braiding $\gamma_{X \otimes Y}$ on $X \otimes Y$. One shows that 
\[
(X,\gamma_X) \otimes (Y,\gamma_Y) := (X \otimes Y,\gamma_{X \otimes Y})
\] 
gives a monoidal product on $\cZ(\cC)$ with unit $(\unit, \id)$.\\
Now we want to mimic this product for comonoidal functors different from $\Id$. Given $F,G : \cC \rightarrow \cC$ arbitrary comonoidal endofunctors of $\cC$ and $X,Y \in \Ob(\cC)$,
$\gamma_X$ an $F$-half-braiding on $X$ and $\gamma_Y$ a $G$-half-braiding on $Y$, we define 
\[
\gamma_{X \otimes Y,V} := (\gamma_{X,GV} \otimes Y)(X \otimes \gamma_{Y,V}).
\] 
This gives a natural transformation $\gamma_{X \otimes Y} : X \otimes Y \otimes \_ \rightarrow FG(\_) \otimes X \otimes Y$.

\begin{Lemma}
\label{half_braiding_lemma}
Let $F,G,H : \cC \rightarrow \cC$ be comonoidal functors.
\begin{enumerate}
\item
There is a functor $\otimes_{F,G} : \cZ^F_{\mathrm{lax}}(\cC) \times \cZ^G_{\mathrm{lax}}(\cC) \rightarrow \cZ^{FG}_{\mathrm{lax}}(\cC)$. 
It is given on objects by
$(X,\gamma_X) \otimes_{F,G} (Y, \gamma_Y) := (X \otimes Y, \gamma_{X \otimes Y})$ and on morphisms by $f \otimes_{F,G} g = f \otimes g$.
\item
Let $\gamma_X$ be an $F$-half-braiding on $X \in \cC$. If $\alpha : F \rightarrow H$ is a comonoidal transformation, then $\gamma^\alpha_{X,V} := (\alpha_V \otimes X)\gamma_{X,V}$ is an $H$-half-braiding on $X$.
\end{enumerate}
\end{Lemma}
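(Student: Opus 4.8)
The plan is to verify that each construction satisfies the two half-braiding axioms of (\ref{half-braiding-axioms}), and for part~(1) additionally that the assignment on morphisms is well defined and functorial. Throughout I would work in the strict setting and use the formulas $(FG)^2(V,W) = F^2(GV,GW)\circ F(G^2(V,W))$ and $(FG)^0 = F^0 \circ F(G^0)$ for the comonoidal structure of the composite, together with naturality of $\gamma_X$ in its second argument and the interchange law; the graphical calculus makes the tensor-factor bookkeeping transparent.

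For the object part of~(1), I first note that $\gamma_{X\otimes Y,V} := (\gamma_{X,GV}\otimes Y)(X\otimes\gamma_{Y,V})$ is natural in $V$ because $\gamma_X$ and $\gamma_Y$ are. The counit axiom is the quick one: in $((FG)^0\otimes X\otimes Y)\,\gamma_{X\otimes Y,\unit}$ I would use naturality of $\gamma_X$ along $G^0 : G\unit\to\unit$ to replace $(F(G^0)\otimes X)\gamma_{X,G\unit}$ by $\gamma_{X,\unit}(X\otimes G^0)$, then collapse the $X$-strand by the counit axiom $(F^0\otimes X)\gamma_{X,\unit}=\id_X$ for $\gamma_X$ and the $Y$-strand by $(G^0\otimes Y)\gamma_{Y,\unit}=\id_Y$ for $\gamma_Y$, leaving $\id_{X\otimes Y}$.

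The principal obstacle is the coassociativity axiom for $\gamma_{X\otimes Y}$. Pulling the $Y$-strand out, I reduce to computing $((FG)^2(V,W)\otimes X)\gamma_{X,G(V\otimes W)}$ and proceed in four moves: (i) use naturality of $\gamma_X$ along $G^2(V,W)$ to turn $(F(G^2(V,W))\otimes X)\gamma_{X,G(V\otimes W)}$ into $\gamma_{X,GV\otimes GW}(X\otimes G^2(V,W))$; (ii) apply the $F$-half-braiding axiom for $\gamma_X$ at the objects $GV,GW$ to split $(F^2(GV,GW)\otimes X)\gamma_{X,GV\otimes GW}$ into $(FGV\otimes\gamma_{X,GW})(\gamma_{X,GV}\otimes GW)$; (iii) combine the resulting $X\otimes G^2(V,W)$ with $X\otimes\gamma_{Y,V\otimes W}$ and apply the $G$-half-braiding axiom for $\gamma_Y$ at $V,W$, obtaining $(GV\otimes\gamma_{Y,W})(\gamma_{Y,V}\otimes W)$; and (iv) reassemble by the interchange law, the only nontrivial instance being that $\gamma_{X,GV}$ and $\gamma_{Y,W}$ act on disjoint strands and hence commute past one another. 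The result is exactly $(FG(V)\otimes\gamma_{X\otimes Y,W})(\gamma_{X\otimes Y,V}\otimes W)$. Getting the order of the four strands right at each stage is the delicate point, which is why I would carry the computation out graphically.

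For the morphism part of~(1), given $f$ in $\cZ^F_{\mathrm{lax}}(\cC)$ and $g$ in $\cZ^G_{\mathrm{lax}}(\cC)$, I would check that $f\otimes g$ intertwines $\gamma_{X_1\otimes Y_1}$ and $\gamma_{X_2\otimes Y_2}$; this drops out of the intertwining conditions for $f$ and $g$ separately together with interchange, and preservation of identities and of composition is inherited from $\otimes$ being a bifunctor on $\cC$. Finally part~(2) is short: writing $\gamma^\alpha_{X,V}=(\alpha_V\otimes X)\gamma_{X,V}$, the coassociativity axiom for $\gamma^\alpha$ follows by feeding the comonoidal-transformation identity $(\alpha_V\otimes\alpha_W)\circ F^2(V,W)=H^2(V,W)\circ\alpha_{V\otimes W}$ into the axiom for $\gamma_X$ and using naturality of $\alpha$ and interchange, while the counit axiom reduces to $H^0\circ\alpha_\unit=F^0$ composed with $(F^0\otimes X)\gamma_{X,\unit}=\id_X$.
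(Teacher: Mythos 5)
Your argument is correct and follows essentially the same route as the paper's proof: unpack $(FG)^2=F^2(G-,G-)\circ F(G^2)$ and the definition of $\gamma_{X\otimes Y}$, apply naturality of $\gamma_X$ along $G^2(V,W)$, then the half-braiding axioms for $\gamma_X$ and $\gamma_Y$, and reassemble by interchange; the paper carries this out graphically and leaves the counit axiom, the morphism check, and part (2) to the reader, all of which you fill in correctly.
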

\begin{proof}
To show 1.\! we have to check the equalities in (\ref{half-braiding-axioms}) for $(FG)^2(X,Y) = F^2(GX,GY) \circ F(G^2(X,Y))$ resp.\! $(FG)^0 = F^0 \circ F(G^0)$.
We only prove the equality for $(FG)^2$ and leave the one for $(FG)^0$ to the reader
\begin{align*}
                                    \textpic{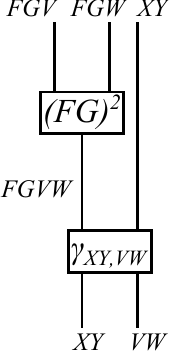} 
\glei{(*)}                        \textpic{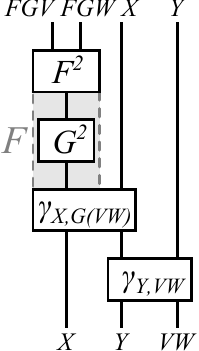} 
\glei{\gamma \text{ nat.}}          \textpic{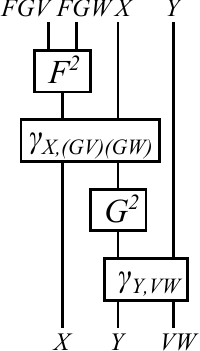} 
\glei{(\ref{half-braiding-axioms})} \textpic{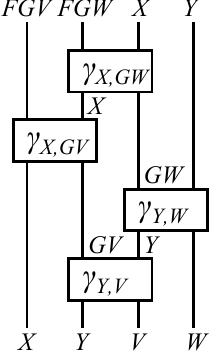} 
\glei{\defi}                        \textpic{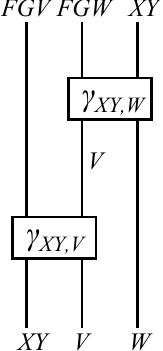}
\end{align*}
For $(*)$ use the definitions of $(FG)^2$ and $\gamma_{X \otimes Y}$.
The proof of 2. follows immediately, since $\alpha$ is comonoidal.
\end{proof}

The following remark gives a categorical criterion for the category $\cZ^F(\cC)$ to be non-empty. 

\begin{Remark}
\label{coend_remark}
In \cite{BV2008} the category $\cZ^F_{\mathrm{lax}}(\cC)$ is called the \emph{$F$-center of $\cC$}. 
If $\cC$ is left-rigid, i.e. every object $X$ in $\cC$ has a left-dual $(\du X, \ev_X, \coev_X)$, the category $\cZ^F(\cC)$ is isomorphic to the category of modules over a monad, provided a certain family of coends exists. We will outline this correspondence in the following.\\
An endofunctor of a left-rigid category $\cC$ is called \emph{centralizable}, if the coend $\int\limits^{V \in \cC} \du F(V) \otimes X \otimes V$ exists for every object $X \in \cC$. If $F$ is centralizable, there is a unique functor $Z_F : \cC \rightarrow \cC$ which is given on objects by
\[
Z_F(X) := \int^{V \in \cC} \du F(V) \otimes X \otimes V \; .
\]
Furthermore, if $F$ is comonoidal, $Z_F$ is a monad on $\cC$ (cf. \cite[Thm. 5.6]{BV2008}). The category $Z_F\mathrm{-}\cC$ of modules over $Z_F$ is isomorphic to the category $\cZ^F_{\mathrm{lax}}(\cC)$ (cf. \cite[Thm. 5.12]{BV2008}).\\
Note that in \cite{BV2008} it is also shown that $Z_F$ is a (Hopf monad resp.\!) bimonad, if $F$ is. In this situation $Z_F\mathrm{-}\cC$ is not only a category, but a (rigid) monoidal category and the above isomorphism is an isomorphism of monoidal categories.
In this paper we will only be concerned with the case that $F$ is a comonoidal functor but not a bimonad.
\end{Remark}
\subsection{Half-braidings and adjunctions}
Now let $F : \cD \rightarrow \cC$ be a strong comonoidal functor with a left-adjoint $\ov{F} : \cC \rightarrow \cD$. By Lemma \ref{comonoidal_functor_lemma} there is a distinguished comonoidal structure $(\ov{F}^2,\ov{F}^0)$ on $\ov{F}$, such that the functors $F$ and $\ov{F}$ are part of a comonoidal adjunction. 
Denote by $\eta : \Id_{\cC} \rightarrow F \ov{F}$ the unit of this adjunction. \\
Given $(X,\gamma_{X}) \in \Ob(\cZ_{\mathrm{lax}}^G(\cD))$ we can define a family $\gamma_{FX,V} : FX \otimes V \rightarrow (FG\ov{F})(V) \otimes FX$ natural in $V \in \Ob(\cC)$ by the following equation:
\begin{align*}
\gamma_{FX,V} := F.(\gamma_{X,\ov{F}V}) \circ (FX \otimes \eta_V ).
\end{align*}
Recall from Remark \ref{comonoidal_functor_remark} $F.(\gamma_{X,\ov{F}V}) := F^2((G\ov{F})(V),X) \circ F(\gamma_{X,\ov{F}V}) \circ F^{-2}(X,\ov{F}V)$.

\begin{Lemma}
\label{half_braiding_action_lemma}
The family $\gamma_{FX} : FX \otimes \_ \rightarrow (FG\ov{F})(\_ ) \otimes FX$ is an $(FG\ov{F})$-half-braiding on $FX$.
In particular the assignment
\[
F_* = \begin{cases} (X,\gamma_X) &\mapsto (FX,\gamma_{FX}) \\ 
(f : (X, \gamma_X) \rightarrow (Y, \gamma_Y)) &\mapsto (Ff : (FX, \gamma_{FX}) \rightarrow (FY, \gamma_{FY}))\end{cases}
\]
is a functor $F_* : \cZ_{\mathrm{lax}}^G(\cD) \rightarrow \cZ_{\mathrm{lax}}^{FG\ov{F}}(\cC)$.
\end{Lemma}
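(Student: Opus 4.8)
The plan is to verify the two defining identities of (\ref{half-braiding-axioms}) for the family $\gamma_{FX}$, with the comonoidal functor now taken to be the composite $FG\ov F$, and afterwards to check that $F_*$ respects morphisms. Three ingredients are used throughout: that the adjunction between $\ov F$ and $F$ is comonoidal (Lemma \ref{comonoidal_functor_lemma}), so that in particular the unit $\eta : \Id_\cC \to F\ov F$ is a comonoidal transformation; the composition rule for comonoidal functors, which unfolds
\[
(FG\ov F)^2(V,W) = F^2(G\ov FV, G\ov FW)\circ F(G^2(\ov FV, \ov FW))\circ F(G(\ov F^2(V,W)))
\]
and $(FG\ov F)^0 = F^0\circ F(G^0)\circ F(G(\ov F^0))$; and the coherence of the iterated comonoidal structure from Remark \ref{comonoidal_functor_remark}, which makes the operation $F.(-)$ functorial for composition and compatible with $\otimes$. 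Here only $F$ is assumed strong comonoidal; the structure $\ov F^2$ is used solely in the forward direction.

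For the multiplicativity identity I would begin from the right-hand side $(FG\ov FV\otimes\gamma_{FX,W})\circ(\gamma_{FX,V}\otimes W)$ and collapse it onto the left. After substituting the definition of $\gamma_{FX}$, the interchange law brings the two units $\eta_V$ and $\eta_W$ adjacent and groups the two $F.(\gamma_{X,-})$ terms. The comonoidal-transformation property of $\eta$ gives
\[
\eta_V\otimes\eta_W = F^2(\ov FV,\ov FW)\circ F(\ov F^2(V,W))\circ\eta_{V\otimes W},
\]
replacing the pair of units by the single unit $\eta_{V\otimes W}$ at the cost of the factors $F^2(\ov FV,\ov FW)$ and $F(\ov F^2(V,W))$. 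By functoriality of $F.(-)$ the two $\gamma_X$-terms assemble into $F.(-)$ of the composite $(G\ov FV\otimes\gamma_{X,\ov FW})\circ(\gamma_{X,\ov FV}\otimes\ov FW)$; the $G$-half-braiding axiom for $\gamma_X$ at the pair $(\ov FV,\ov FW)$ rewrites this as $(G^2(\ov FV,\ov FW)\otimes X)\circ\gamma_{X,\ov FV\otimes\ov FW}$, and naturality of $\gamma_X$ against the morphism $\ov F^2(V,W)$ converts $\gamma_{X,\ov FV\otimes\ov FW}$ into $\gamma_{X,\ov F(V\otimes W)}$ while producing a factor $G(\ov F^2(V,W))$. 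The liberated factors $F^2$, $F(G^2)$ and $F(G(\ov F^2))$ recombine, by the composition rule above, into $(FG\ov F)^2(V,W)$, which together with $\gamma_{FX,V\otimes W}=F.(\gamma_{X,\ov F(V\otimes W)})\circ(FX\otimes\eta_{V\otimes W})$ is exactly the left-hand side.

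The unit identity is the same argument in miniature: expanding $\gamma_{FX,\unit}$, moving $\ov F^0$ through $\gamma_X$ by naturality, and combining the counital part of the comonoidal-transformation property of $\eta$, namely $F^0\circ F(\ov F^0)\circ\eta_\unit=\id_\unit$, with the unit axiom $(G^0\otimes X)\gamma_{X,\unit}=X$ for $\gamma_X$, collapses $((FG\ov F)^0\otimes FX)\circ\gamma_{FX,\unit}$ to $\id_{FX}$. To see finally that $F_*$ is a functor, it suffices to check the morphism condition: for $f:(X,\gamma_X)\to(Y,\gamma_Y)$ in $\cZ^G_{\mathrm{lax}}(\cD)$ one rewrites both $FG\ov FV\otimes Ff$ and $Ff\otimes F\ov FV$ as $F.(-)$ of the evident morphisms, so that functoriality of $F.(-)$ reduces the desired intertwining of $\gamma_{FX}$ and $\gamma_{FY}$ to $F$ applied to the identity $(G\ov FV\otimes f)\circ\gamma_{X,\ov FV}=\gamma_{Y,\ov FV}\circ(f\otimes\ov FV)$ that $f$ satisfies; preservation of identities and composition is immediate from functoriality of $F$.

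The step I expect to be the main obstacle is the bookkeeping inside the multiplicativity identity: one must keep the three nested comonoidal structures $F^2$, $G^2$ and $\ov F^2$ in exactly the order dictated by the composition rule while simultaneously splitting the units (through the comonoidal property of $\eta$) and the half-braiding (through its own axiom and naturality of $\gamma_X$), and it is precisely the interaction of $\eta$ with the comonoidal structures of $F$ and $\ov F$ that makes these two splittings line up. The factor-counting underlying the operation $F.(-)$ must also be tracked carefully, and here the $F^n$-coherence of Remark \ref{comonoidal_functor_remark} is what guarantees that the various parenthesizations agree. I therefore expect the cleanest presentation to be in graphical notation, where the comonoidal property of $\eta$ and the half-braiding axiom for $\gamma_X$ become local moves and the required equality reduces to a planar rearrangement; the algebraic argument above is that same manipulation written linearly.
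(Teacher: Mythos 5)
Your argument is correct and follows essentially the same route as the paper's proof: both verify the multiplicativity axiom by expanding the definition of $\gamma_{FX}$, merging the two units via the comonoidal-transformation property of $\eta$, applying the half-braiding axiom and naturality of $\gamma_X$ (against $\ov{F}^2$), and reassembling the comonoidal structures using the $F^{\pm n}$-coherence of Remark \ref{comonoidal_functor_remark} --- you simply run the chain of equalities from the right-hand side to the left, whereas the paper works in the opposite direction and presents the manipulation graphically. Your additional sketches of the unit axiom and the morphism condition for $F_*$ fill in exactly the parts the paper leaves to the reader.
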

\begin{proof}
We have to prove the two equations in (\ref{half-braiding-axioms}) for $(FG\ov{F})^2(V,W)$ and $(FG\ov{F})^0$. We prove the first-equation and leave the second to the reader.
For brevity write $H := FG\ov{F}$ and $K:= G\ov{F}$. Remember in the following computation the notations from Remark \ref{comonoidal_functor_remark}.
\begin{align}
\label{adjoint_computation}
        \textpic{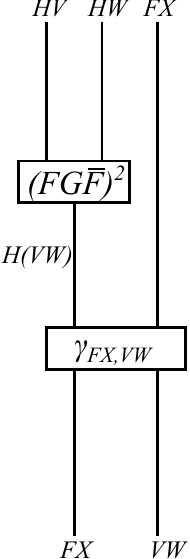} 
\glei{} \textpic{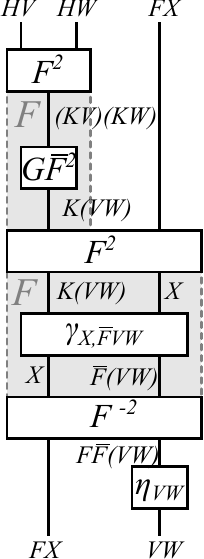} 
\glei{} \textpic{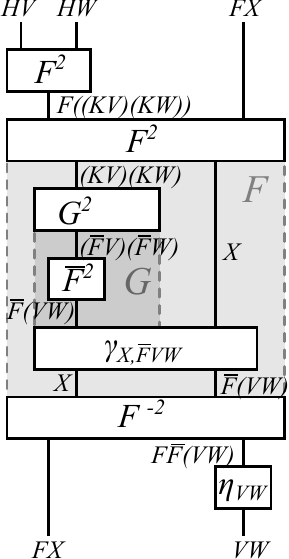} 
\glei{} \textpic{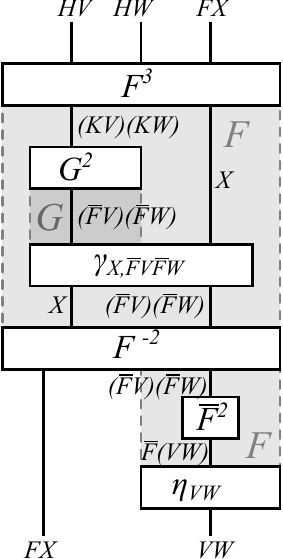}
\end{align}
The first equality follows by the definitions of $(F(G\ov{F}))^2$ and $\gamma_{FX}$. The second uses naturality of $F^2$ and the definition of $(G\ov{F})^2$.
The last equality follows from naturality of $\gamma_X$ and $F^{-2}$. 
The right-hand side of (\ref{adjoint_computation}) is by the first equality of (\ref{half-braiding-axioms}) and definition of $(F\ov{F})^2$ equal to
\begin{align*}
      \textpic{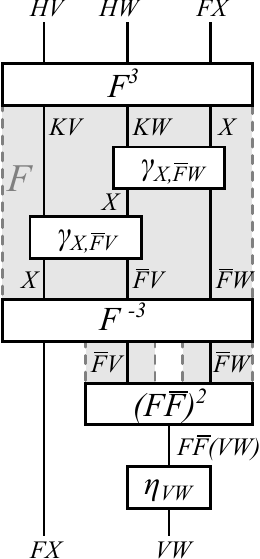} 
\glei{(*)}  \textpic{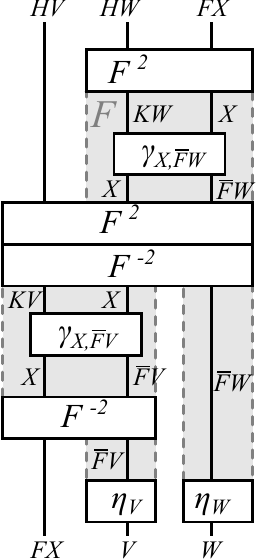} 
\glei{} (HV \otimes \gamma_{FX,W})(\gamma_{FX,V} \otimes W).
\end{align*}
For the equality $(*)$ we used that $\eta$ is a comonoidal transformation and the equalities $F^{-3}(X,\ov{F}V,\ov{F}W) = F^{-2}(X \otimes \ov{F}V,\ov{F}W)(F^{-2}(X,\ov{F}V) \otimes F\ov{F}W)$ and 
$F^3(G\ov{F}V,G\ov{F}W,X) = (FG\ov{F}V \otimes F^2(G\ov{F}W, X))F^2(G\ov{F}V,G\ov{F}W\otimes X)$ together with naturality of $F^{-2}$ resp.\! $F^2$.
The last equal sign uses (\ref{comonoidal_coherence_1}) and finally the definition of $\gamma_{FX}$ and $\gamma_{FY}$.
\end{proof}
\section{The equivariant center construction}
\label{center_section}

In this section we will give for an arbitrary monoidal category $\cC$ with a group action $\varphi : \un{G} \rightarrow \END^\otimes (\cC)$ a $G$-braided category $\cZ_G(\cC)$.
Fix the notation $\cZ_g(\cC) := \cZ^{\varphi_g}(\cC)$ for every $g \in G$. 
A pair $(X, \gamma^g_X)$ will always be an object of $\cZ_g(\cC)$.
Recall that the $\gamma^g_X$ are natural \underline{isomorphisms} (cf. Remark \ref{half_braiding_remark}.\ref{strong_center}).\\
For $(X,\gamma^g_X)$ and $(Y,\gamma_Y^h)$ we define $(X, \gamma^g_X) \odot (Y, \gamma^h_Y) := (X \otimes Y, \gamma_{X \otimes Y})$ 
with $\gamma_{X \otimes Y}$ the natural isomorphism given by
\begin{align}
\label{action_center_mult_def}
\gamma_{X \otimes Y,V}:=(\varphi_{g,h,V} \otimes X \otimes Y)(\gamma^g_{X, \varphi_h(V)} \otimes Y)(X \otimes \gamma^h_{Y,V})
\end{align}
for $V \in \cC$ and $g,h \in G$.
By Lemma \ref{half_braiding_lemma} we have $(X \otimes Y, \gamma_{X \otimes Y}) \in \cZ_{gh}(\cC)$. 
\begin{Lemma}
\label{center_monoidal_lemma}
Let $\varphi$ be an action of $G$ on the (strict) monoidal category $\cC$. Define the category $\cZ_G(\cC)$ as $\coprod_{g \in G} \cZ_g(\cC)$, the disjoint union of the categories $\cZ_g(\cC)$.\\
This category is strict monoidal with product $\odot$. The unit of $\cZ_G(\cC)$ is $(\unit, \id) \in \cZ_e(\cC)$ and we have seen above that $\cZ_G(\cC)$ is $G$-graded with $g$-homogeneous component $\cZ_g(\cC)$.
\end{Lemma}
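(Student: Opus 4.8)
The plan is to check the three defining properties of a strict monoidal category for $(\cZ_G(\cC), \odot, (\unit,\id))$: that $\odot$ is a well-defined bifunctor landing in the correct graded component, that it is strictly associative, and that $(\unit,\id)$ is a strict two-sided unit. The first property is essentially already available from Lemma \ref{half_braiding_lemma}. Comparing the defining formula (\ref{action_center_mult_def}) with that lemma, the half-braiding $\gamma_{X \otimes Y}$ factors as two steps: first one forms the $\varphi_g\varphi_h$-half-braiding $(\gamma^g_{X,\varphi_h(V)} \otimes Y)(X \otimes \gamma^h_{Y,V})$ of part 1, then one transports it along the comonoidal isomorphism $\varphi_{g,h} : \varphi_g\varphi_h \to \varphi_{gh}$ exactly as in part 2 (taking $\alpha = \varphi_{g,h}$). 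Hence $(X\otimes Y, \gamma_{X\otimes Y}) \in \cZ_{gh}(\cC)$, strongness being automatic since all factors are isomorphisms. As both steps are functorial, $\odot$ is a bifunctor $\cZ_G(\cC) \times \cZ_G(\cC) \to \cZ_G(\cC)$ with $f \odot g = f \otimes g$ on morphisms.

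Next I would dispatch the unit laws, which are the quickest. Since we assume $\varphi_0 = \id_{\Id}$, the axioms (\ref{group_action_axioms_2}) force $\varphi_{e,g} = \id$ and $\varphi_{g,e} = \id$ componentwise. The object $(\unit,\id)$ lies in $\cZ_e(\cC)$, where $\varphi_e = \Id$ is strict comonoidal and $\gamma^e_{\unit,V} = \id_V$ is readily seen to satisfy the half-braiding axioms. Substituting $(\unit,\id)$ into the left, respectively right, slot of (\ref{action_center_mult_def}) and using these identities together with strictness of $\cC$ (so that $\unit \otimes X = X = X \otimes \unit$ and tensoring with $\unit$ or with $\id$ changes nothing), every non-trivial factor collapses and one reads off $\gamma_{\unit \otimes X, V} = \gamma^g_{X,V} = \gamma_{X \otimes \unit, V}$. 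Thus $(\unit,\id) \odot (X,\gamma^g_X) = (X,\gamma^g_X) = (X,\gamma^g_X) \odot (\unit,\id)$ strictly.

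The heart of the argument, and the step I expect to be the main obstacle, is strict associativity on objects; on morphisms it is immediate from $f \odot g = f\otimes g$ and strictness of $\cC$. Fix $X \in \cZ_g(\cC)$, $Y \in \cZ_h(\cC)$, $Z \in \cZ_k(\cC)$. Both $(X \odot Y)\odot Z$ and $X \odot (Y \odot Z)$ have underlying object $X \otimes Y \otimes Z$ and lie in $\cZ_{ghk}(\cC)$, so only the half-braidings must be compared. I would expand each side by substituting (\ref{action_center_mult_def}) into itself. After expansion the two outermost factors $(X \otimes \gamma^h_{Y,\varphi_k(V)} \otimes Z)(X\otimes Y \otimes \gamma^k_{Z,V})$ coincide on both sides, leaving the remaining factors to be matched. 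These differ in two ways: the instances of $\gamma^g_X$ occur at $\varphi_h\varphi_k(V)$ versus $\varphi_{hk}(V)$, and the compositors appear with different groupings.

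The first discrepancy is removed by naturality of $\gamma^g_X$ in its second argument, applied to the morphism $\varphi_{h,k,V} : \varphi_h\varphi_k(V) \to \varphi_{hk}(V)$, which rewrites $\gamma^g_{X,\varphi_{hk}(V)} \circ (X \otimes \varphi_{h,k,V})$ as $(\varphi_g(\varphi_{h,k,V}) \otimes X)\circ \gamma^g_{X,\varphi_h\varphi_k(V)}$. After this rewrite the two sides become equal precisely when
\begin{align*}
\varphi_{gh,k,V} \circ \varphi_{g,h,\varphi_k(V)} = \varphi_{g,hk,V} \circ \varphi_g(\varphi_{h,k,V}),
\end{align*}
which is exactly the component form of the associativity axiom (\ref{group_action_axioms_1}) for the compositors. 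This identity closes the computation and yields strict associativity, completing the verification that $\cZ_G(\cC)$ is strict monoidal; the $G$-grading was already noted in the discussion preceding the statement.
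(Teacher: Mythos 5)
Your proposal is correct and follows essentially the same route as the paper: well-definedness via Lemma \ref{half_braiding_lemma}, associativity by expanding both sides, applying naturality of $\gamma^g_X$ to the morphism $\varphi_{h,k,V}$, and invoking the compositor coherence (\ref{group_action_axioms_1}), with the unit laws reduced to (\ref{group_action_axioms_2}) and $\varphi_0=\id$. The only difference is that you spell out the unit verification, which the paper leaves as "similar".
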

\begin{proof}
We still have to prove that $\odot$ is associative and that $(\unit, \id_\Id)$ is indeed the unit of $\odot$.\\
Now take for $g,h,k \in G$ objects $(X , \gamma^g_X), (Y , \gamma^h_Y), (Z , \gamma^k_Z) \in \cZ_G(\cC)$. In the following we sometimes omit the $\otimes$ sign.
For associativity we have to show for all $V \in \cC$ the equality $\gamma_{(X  Y)  Z, V} = \gamma_{X  (Y  Z),V}$. We have by definition
\begin{align}
\label{gamma_lhs}
\gamma_{(X  Y)  Z, V} = &(\varphi_{gh,k,V} \otimes XYZ)\circ (\varphi_{g,h,\varphi_k(V)} \otimes XYZ) \circ 
(\gamma_{X,\varphi_h\varphi_kV} \otimes YZ)\circ \\ \nonumber
&(X \otimes \gamma_{Y,\varphi_kV} \otimes Z) \circ (XY \otimes \gamma_{Z,V}) \\
\label{gamma_rhs}
\gamma_{X  (Y  Z), V} = &(\varphi_{g,hk,V} \otimes XYZ) \circ (\gamma_{X,\varphi_{hk}V} \otimes YZ) \circ
(\varphi_{h,k,V} \otimes XYZ) \circ \\ \nonumber
&(X \otimes \gamma_{Y,\varphi_kV} \otimes Z) \circ (XY \otimes \gamma_{Z,V}).
\end{align}
By naturality of $\gamma_X$ we deduce from (\ref{gamma_rhs}) the equality
\[
\gamma_{X  (Y  Z),V} = (\varphi_{g,hk,V} \otimes XYZ)
(\varphi_g(\varphi_{h,k,V}) \otimes XYZ)(\gamma_{X,\varphi_{h}\varphi_{k}V} \otimes YZ)(X \otimes \gamma_{Y,\varphi_kV} \otimes Z)(XY \otimes \gamma_{Z,V}).
\]
Now use the equality $\varphi_{gh,k,V} \circ \varphi_{g,h,\varphi_k(V)} = \varphi_{g,hk,V} \circ \varphi_g(\varphi_{h,k,V})$ which holds by (\ref{group_action_axioms_1}).
The claim about the unit follows in a similar way by using (\ref{group_action_axioms_2}).
\end{proof}

\begin{Remark}
\label{action_remarks}
In the following computations we will often be confronted with expressions one obtains from composing the compositors $\menge{\varphi_{g,h} : \varphi_g \circ \varphi_h \rightarrow \varphi_{gh}}_{g,h \in G}$
of the action $\varphi : \un{G} \rightarrow \END^\otimes(\cC)$. In analogy to Remark \ref{comonoidal_functor_remark} we will introduce for $g_1,\ldots, g_n \in G$ ($n\geq 3$) the following recursively defined notation
\[
\varphi_{g_1,\ldots, g_n,V} := \varphi_{g_1\cdots g_{n-1}, g_n,V} \circ \varphi_{g_1,\ldots, g_{n-1}, \varphi_{g_n}(V)}.
\]
Also we will write $g(X)$ instead of $\varphi_g(X)$ for $X \in \cC$. In the presence of compositors this requires some care: Be aware that the objects (or morphisms) $g(h(X))$ and $(gh)(X)$ are isomorphic via $\varphi_{g,h,X}$ and are different in general.\\
The next remark will be frequently used: Given $g_1, \ldots, g_n, h_1, \ldots, h_m \in G$ with $g_1\cdots g_n = h_1\cdots h_m$. 
Every natural transformation $\alpha : \varphi_{g_1} \circ \cdots \circ \varphi_{g_n} \rightarrow \varphi_{h_1} \circ \cdots \circ \varphi_{h_m}$ obtained by horizontal and vertical compositions of the elements of $\menge{\varphi_{g,h} : \varphi_g \circ \varphi_h \rightarrow \varphi_{gh}}_{g,h \in G}$ is of the form
\[
\alpha = \varphi^{-1}_{h_1,\ldots, h_m} \bullet \varphi_{g_1,\ldots, g_n}.
\]
This follows by the coherence conditions (\ref{group_action_axioms_1}) and (\ref{group_action_axioms_2}) and an inductive argument.\\
In the graphical notation we will denote the natural transformation $\varphi_{g_1, \ldots ,g_n}$ by \textpic{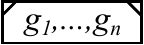} and $\varphi^{-1}_{g_1, \ldots ,g_n}$ by \textpic{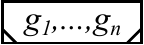}\; .
\end{Remark}

By now we have constructed a $G$-graded monoidal category $\cZ_G(\cC)$. As the next step in describing the $G$-braided structure of $\cZ_G(\cC)$ we define an action $\Phi$ of $G$ on $\cZ_G(\cC)$. Due to Lemma \ref{half_braiding_lemma} and \ref{half_braiding_action_lemma} we already know that every $\varphi_g$ defines a functor $\Phi_g^h : \cZ_h(\cC) \rightarrow \cZ_{ghg\iv}(\cC)$.
The image of $(X, \gamma^h_X)$ under this functor is the pair $(\varphi_g(X), \gamma_{g.X})$ where $\gamma_{g.X}$ is the $\varphi_{ghg\iv}$-half-braiding
\begin{align}
\label{Phi_g_h}
\gamma_{g(X),V} &= 
(\varphi_{g,h,g\iv ,V } \otimes g(X) ) \circ  g.(\gamma^h_{X,g \iv (X)}) \circ (g (X) \otimes \varphi^{-1}_{g,g\iv,V}).
\end{align}
The notation $g.(\gamma^h_{X,g \iv (X)})$ was introduced in Remark \ref{comonoidal_functor_remark}.
The next Lemma will show that the components of the compositors $\varphi_{g,h}$ and the components of the comonoidal structures $(\varphi_g^2, \varphi_g^0)$ of the action $\varphi$ are morphisms in the category $\cZ_G(\cC)$. 
This will help us to define the above mentioned action $\Phi$ by monoidal functors. 
Also the components of a half-braiding turn out to be morphisms in $\cZ_G(\cC)$ which will be used to define the $G$-braiding on $\cZ_G(\cC)$.

\begin{Lemma}
\label{morph_lemma}
Given $g,h,k,\ell \in G$ and objects $(X,\gamma^g_X), (Y, \gamma^h_Y)$ in $\cZ_G(\cC)$. 
The morphisms $\varphi_\ell^{-2}(X,Y) : \ell(X) \otimes \ell(Y) \rightarrow \ell(X \otimes Y)$, $\varphi^{0}_\ell : \ell(\unit) \rightarrow \unit$, 
$\varphi_{k,\ell,X} : k(\ell(X)) \rightarrow (k\ell)(X)$ and $\gamma_{X, Y} : X \otimes Y \rightarrow g(Y) \otimes X$ are morphisms in $\cZ_G(\cC)$.
\end{Lemma}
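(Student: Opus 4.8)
The plan is to use the explicit description of $\cZ_G(\cC)$ as the disjoint union $\coprod_{g\in G}\cZ^{\varphi_g}(\cC)$: a morphism of $\cZ_G(\cC)$ is a morphism of $\cC$ whose source and target lie in one common homogeneous component $\cZ_m(\cC)$ and which commutes with the ambient $\varphi_m$-half-braidings in the sense of Remark \ref{half_braiding_remark}, i.e. $(\varphi_m(V)\otimes f)\circ\gamma_{A,V}=\gamma_{B,V}\circ(f\otimes V)$ for all $V\in\cC$. So for each of the four families I would first read off the source and target objects together with their half-braidings — using the product formula (\ref{action_center_mult_def}) and the action formula (\ref{Phi_g_h}) together with Lemmas \ref{half_braiding_lemma} and \ref{half_braiding_action_lemma} — and check that both sit in the same degree. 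These degree checks are pure group theory: $\ell gh\ell\iv=(\ell g\ell\iv)(\ell h\ell\iv)$ for $\varphi_\ell^{-2}(X,Y)$, $\ell e\ell\iv=e$ for $\varphi^0_\ell$, $k\ell g\ell\iv k\iv=(k\ell)g(k\ell)\iv$ for $\varphi_{k,\ell,X}$, and $(ghg\iv)g=gh$ for $\gamma_{X,Y}$. The substance is then the verification of the commutation relation, which I would carry out in the graphical calculus of the paper.

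For $\varphi^0_\ell$ the claim is essentially immediate: both source and target lie in $\cZ_e(\cC)$, where $\varphi_e=\Id$ and the half-braiding of the unit $(\unit,\id)$ is trivial, so after expanding $\gamma_{\ell(\unit)}$ via (\ref{Phi_g_h}) with $X=\unit$, $h=e$ the required identity collapses using the counit coherence (\ref{comonoidal_coherence_2}) and the normality of $\varphi_\ell$. The morphisms $\varphi_\ell^{-2}(X,Y)$ and $\varphi_{k,\ell,X}$ encode that $\Phi_\ell$ is monoidal and that the $\varphi_{k,\ell}$ assemble into the compositors of the action $\Phi$; for both I would expand the relevant half-braidings completely and reduce. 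For $\varphi_\ell^{-2}(X,Y)$ one compares the product half-braiding $\gamma_{\ell(X)\otimes\ell(Y)}$ of the images $\Phi_\ell(X,\gamma^g_X),\Phi_\ell(Y,\gamma^h_Y)$ with $\gamma_{\ell(X\otimes Y)}$, the image under $\Phi_\ell$ of the product; the two agree after using the comonoidal coherence (\ref{comonoidal_coherence_1}), naturality of $\varphi_\ell^2$ and of the compositors, and the half-braiding axioms (\ref{half-braiding-axioms}).

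For $\varphi_{k,\ell,X}$ the half-braiding on $k(\ell(X))$ is obtained by applying (\ref{Phi_g_h}) twice and the one on $(k\ell)(X)$ by a single application; the morphism condition then becomes an identity between two composites of compositor components wrapped around $\gamma^g_X$. I would push $\varphi_{k,\ell,X}$ through $\gamma^g_X$ by naturality, absorb the comonoidality of $\varphi_{k,\ell}$ against the occurrences of $\varphi^2$, and then observe that the two resulting strings of compositors have the same source and target group words and hence coincide by the normal-form statement of Remark \ref{action_remarks} (which rests on the axioms (\ref{group_action_axioms_1})--(\ref{group_action_axioms_2})). The case $\gamma_{X,Y}=\gamma^g_{X,Y}$ is the ``braiding is central'' statement: expanding $\gamma_{X\otimes Y}$ via (\ref{action_center_mult_def}) and $\gamma_{g(Y)\otimes X}$ via (\ref{action_center_mult_def}) and (\ref{Phi_g_h}), the identity reduces to the half-braiding axioms (\ref{half-braiding-axioms}) for $\gamma^g_X$ and $\gamma^h_Y$ together with naturality of $\gamma^g_X$ in its argument and of the compositors, exactly as in the proof that the braiding of the ordinary Drinfeld center $\cZ(\cC)$ is a morphism of the center.

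The real obstacle here is bookkeeping rather than ideas: the formulas (\ref{action_center_mult_def}) and (\ref{Phi_g_h}) introduce long composites of compositors $\varphi_{g,h}$ and of comonoidal structure maps $\varphi^2_\ell$, and the work consists of repeatedly normalizing these composites via Remark \ref{action_remarks} and commuting $\varphi^2$ past the half-braidings. I expect the $\varphi_\ell^{-2}(X,Y)$ and $\gamma^g_{X,Y}$ cases to be the most laborious, while $\varphi^0_\ell$ is essentially free; throughout, the graphical notation of Remarks \ref{comonoidal_functor_remark} and \ref{action_remarks} is what keeps the compositor strings manageable.
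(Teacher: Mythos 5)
Your proposal is correct and follows essentially the same route as the paper: identify the source and target objects of each of the four morphisms (with the same degree checks), then verify the commutation with the relevant half-braidings by expanding (\ref{action_center_mult_def}) and (\ref{Phi_g_h}), invoking naturality, the comonoidal coherences, the half-braiding axioms, and the normal form for composites of compositors from Remark \ref{action_remarks}. The paper likewise dismisses the $\varphi^0_\ell$ case as trivial and carries out the remaining three cases exactly along the lines you describe, in the graphical calculus.
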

\begin{proof}
\begin{itemize}
\item 
We first prove that $\varphi^{-2}_\ell(X,Y)$ is a morphism from  $\Phi_\ell^g(X,\gamma^g_{X}) \odot \Phi_\ell^h(Y, \gamma^h_{Y})$ to 
$\Phi_\ell^{gh}((X,\gamma^g_{X}) \odot (Y, \gamma^h_{Y}))$. Let $V$ be an arbitrary object in $\cC$ and set $\sf{G} := \varphi_{\ell}\varphi_{g}\varphi_{\ell\iv}$ and
 $\sf{H} := \varphi_{\ell}\varphi_{h}\varphi_{\ell\iv}$. We have the chain of equalities
\begin{align*}
  &((lghl\iv )(V) \otimes \varphi^{-2}_\ell (X,Y) ) \circ \gamma_{\ell(X) \otimes \ell(Y), V}\\
\glei{\defi}&\textpic{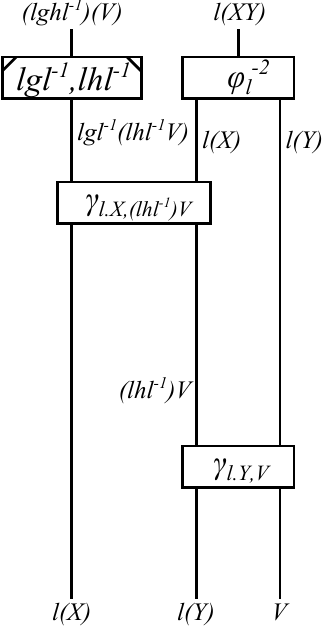} 
\glei{\defi} \textpic{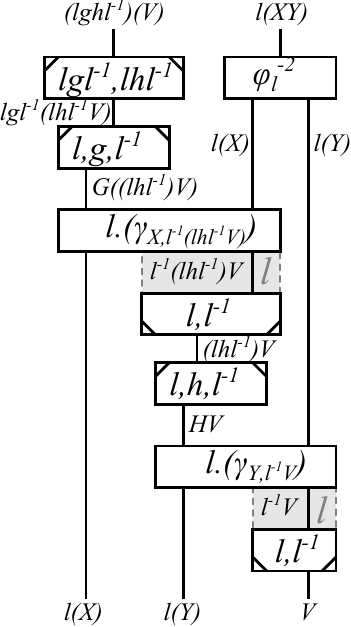} 
\glei{(*)} \textpic{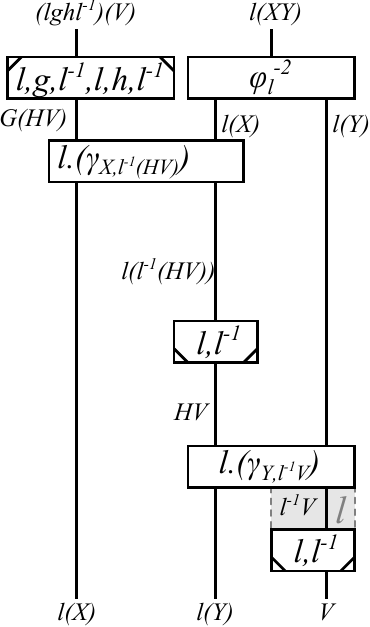}
\end{align*}
We first used the definition of $\gamma_{\ell(X) \otimes \ell(Y)}$ and secondly the definition of $\gamma_{\ell(X)}$ and $\gamma_{\ell(Y)}$.
For $(*)$ we used naturality of $\varphi^{-1}_{\ell, \ell\iv}$ and $\ell.\gamma_X$ to move $\varphi_{\ell,h,\ell\iv,\sf{H}V}$ upwards, 
as well as the definition of $\varphi_{\ell,g\ell\iv,\ell,h,\ell\iv,V}$ (see Remark \ref{action_remarks}).
Now use the equality $\varphi^{-1}_{\ell, \ell^{-1}, {\sf H}V} = \ell(\varphi^{-1}_{\ell\iv,\ell, h(\ell\iv V)})$ to obtain
\begin{align*}
\glei{} &\textpic{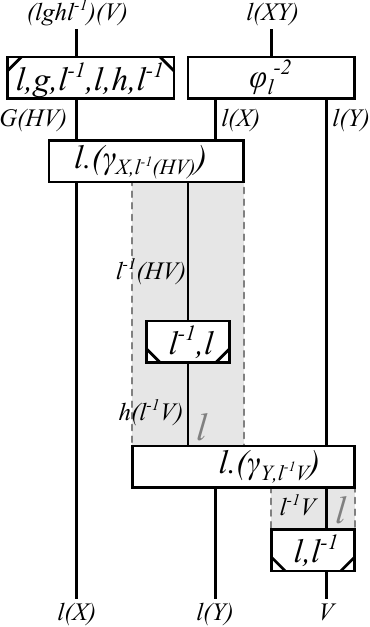} 
\glei{(**)} \textpic{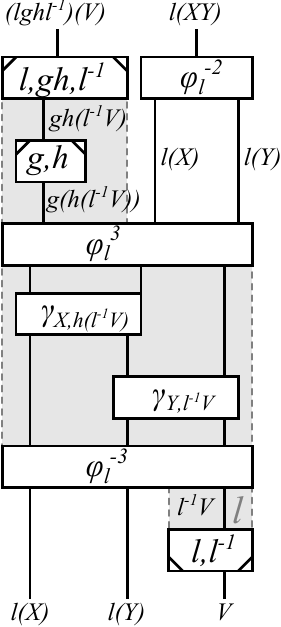} 
\glei{\substack{\varphi^3 \text{ nat.}\\ +\; \gamma \defi}} \textpic{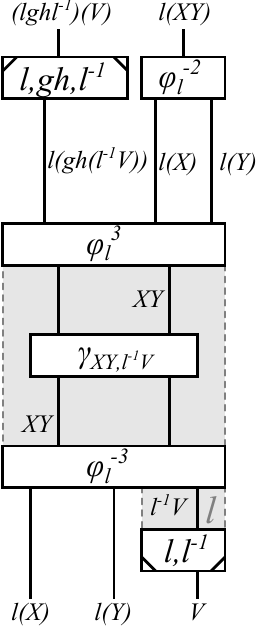} 
\glei{\varphi^{\pm 3} \defi} \textpic{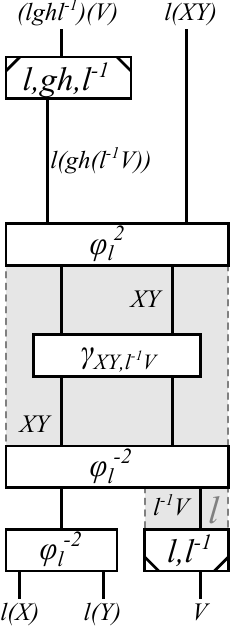} \\
\glei{\defi}& \gamma_{\ell(X \otimes Y),V} \circ (\varphi^{-2}_\ell(X,Y) \otimes V)
\end{align*}
Step $(**)$ uses naturality of $\ell.\gamma_X$ to move $\varphi^{-1}_{\ell\iv,\ell, h(\ell\iv V)}$ upwards the $V$-line, equality (\ref{varphi_lemma}) below and finally 
Remark \ref{action_remarks} which tells us $\varphi_{\ell,g,\ell\iv,\ell,h,\ell\iv}\circ \ell(g(\varphi_{\ell\iv,\ell,h(\ell\iv V)})) = \ell (\varphi_{g,h,\ell\iv V})$.
The equality 
\begin{align}
\label{varphi_lemma}
 &(\ell.\gamma_{X,h(\ell\iv V)} \otimes \ell(Y))(\ell(X) \otimes \ell.\gamma_{Y,\ell\iv V})\\ \nonumber
=&\varphi^3 (h(\ell\iv V), X, Y) \circ \ell (\gamma_{X,h(\ell\iv V)} \otimes Y) \circ \ell (X \otimes \gamma_{Y,\ell\iv V}) \circ \varphi^{-3}(X,Y, \ell\iv V)
\end{align}
is an easy consequence of the definitions of $\ell.\gamma_X$, $\ell.\gamma_Y$, $\varphi^{\pm 3}_\ell$ and the naturality of $\varphi^{\pm 2}_\ell$.
\item
The statement that $\varphi_\ell^{0}$ is a morphism from $\varphi^e_\ell((\unit, \id))$ to $(\unit, \id)$ is trivial.
\item
Now we prove that $\varphi_{k,\ell,X}$ is a morphism from $(\Phi_k^{\ell g \ell \iv} \Phi_\ell^g )(X,\gamma^g_X)$ to $\Phi_{k\ell}^g (X,\gamma^g_X)$. 
Again let $V \in \Ob(\cC)$ :
\begin{align*}
  &(\varphi_{kl}\varphi_g \varphi_{(kl)\iv}V \otimes \varphi_{k,\ell,X}) \circ \gamma_{k(\ell X),V}\\
\glei{\defi} &\textpic{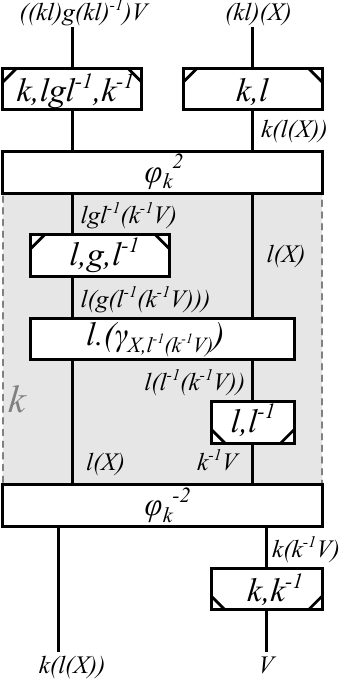}
\glei{(*)}\textpic{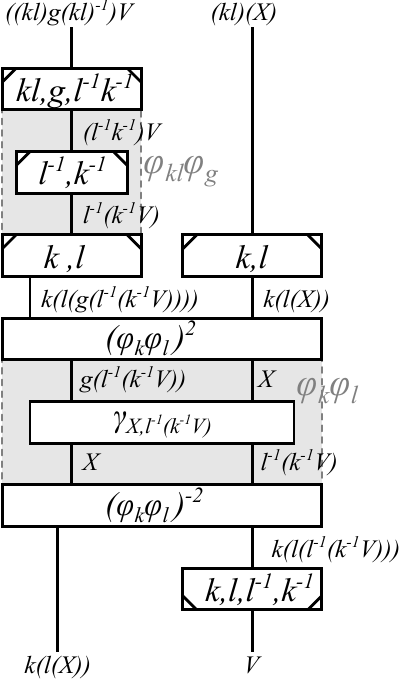}
\end{align*}
The equal sign $(*)$ uses the following: 
pull-out $\varphi_{\ell,g,\ell\iv,\ell(g(\ell\iv(k\iv V)))}$ and $\varphi^{-1}_{\ell,\ell\iv,\k\iv(V)}$ from the gray shaded area by naturality of $\varphi^{\pm 2}$.
Use Remark \ref{action_remarks} and the definitions of $\ell.\gamma_X$ and $(\varphi_k\varphi_\ell)^{\pm 2}$.
Now we can move $\varphi_{\ell\iv,k\iv,\ell\iv(k\iv V)}$ by naturality from up left to bottom right and then use Remark \ref{action_remarks} to arrive at
\begin{align*}
=&\textpic{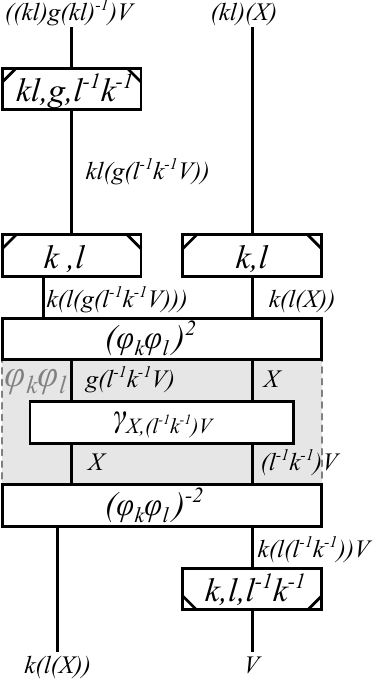}
\glei{(**)}\textpic{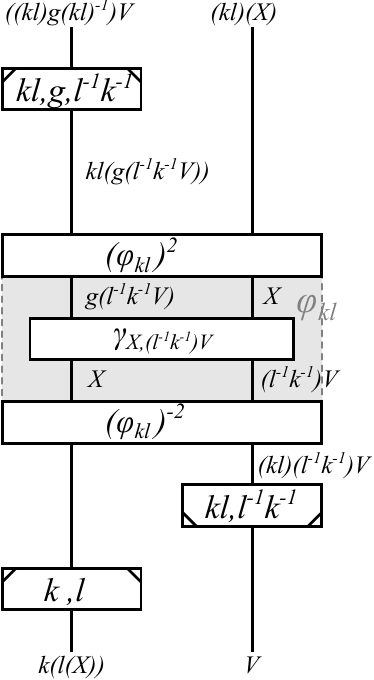} \\
\glei{\defi}& \gamma_{(k\ell )X, V} \circ (\varphi_{k,\ell,X} \otimes V).
\end{align*}
In $(**)$ we use that $\varphi_{k,\ell}$ is a comonoidal transformation and again Remark \ref{action_remarks}.
\item
Finally we show that $\gamma_{X,Y}$ is a morphism from $(X,\gamma^g_{X}) \odot (Y, \gamma^h_{Y})$ to $\Phi_g^h(Y , \gamma^h_Y) \odot (X, \gamma_X)$. For $V \in \Ob(\cC)$ we have
\begin{align*}
            &\gamma_{g(Y) \otimes X, V} \circ (\gamma_{X,Y} \otimes V)\\
&\glei{\defi} \textpic{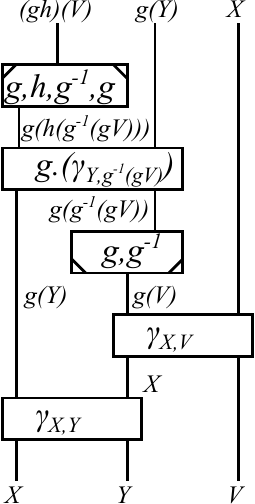}
\glei{\substack{(\ref{half-braiding-axioms}) +\\ \ref{action_remarks}}} \textpic{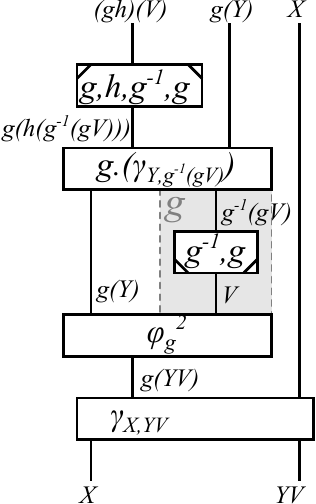}
\glei{\substack{\text{nat. }+\\ \ref{action_remarks}}} \textpic{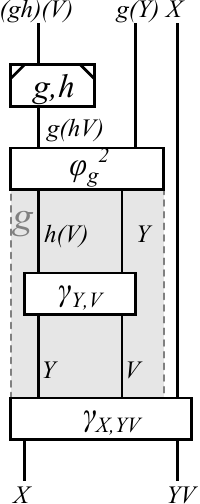}
\glei{\substack{\text{nat. }\\ \gamma_X}} \textpic{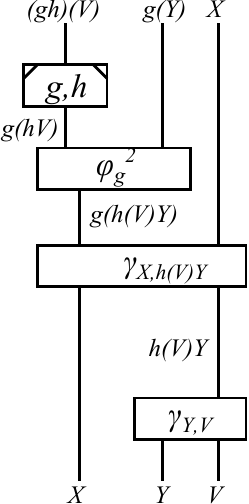}\\
&\glei{\substack{(\ref{half-braiding-axioms}) + \\ \defi}} ((gh)V \otimes \gamma_{X,Y}) \circ \gamma_{X \otimes Y, V}
\end{align*}
\end{itemize}
\end{proof}

We are now ready to prove our main result.

\begin{Theorem}
\label{main_result}
Let $\cC$ be a monoidal category with an action $\varphi: \un{G} \rightarrow \END^\otimes(\cC)$ of a group $G$.
The monoidal category $\cZ_G(\cC) = \coprod_{g\in G}\cZ_g(\cC)$ from Lemma \ref{center_monoidal_lemma} and 
the functors $\Phi_g^h : \cZ_h(\cC) \rightarrow \cZ_{ghg\iv}(\cC)$ defined in (\ref{Phi_g_h}) fulfill the following:
\begin{enumerate}
\item
For $g \in G$ the functor $\Phi_g := \coprod_{h \in G} \Phi_g^h : \cZ_G(\cC) \rightarrow \cZ_G(\cC)$ is strong comonoidal.
\item
The assignment $g \mapsto \Phi_g$ extends to an action $\Phi$ of $G$ on $\cZ_G(\cC)$ that is $G$-crossed, in the sense of Section \ref{braiding_section}.
\item
The family $\menge{\left. c_{(X,\gamma^g_X),(Y,\gamma^h_Y)} := \gamma_{X,Y} \right\vert (X,\gamma^g_X),(Y,\gamma^h_Y) \in \cZ_G(\cC)}$ equips the $G$-crossed category $\cZ_G(\cC)$ with a $G$-braiding.
\end{enumerate}
\end{Theorem}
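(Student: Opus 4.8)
The plan is to read off all three statements from Lemma \ref{morph_lemma}, which has already done the essential work of checking that the candidate structure morphisms actually are morphisms in $\cZ_G(\cC)$; what remains is to package them correctly and to verify coherence axioms that will, for the most part, be inherited from the corresponding axioms for the action $\varphi$ on $\cC$. For part 1, I would equip each $\Phi_g$ with the comonoidal structure
\[
\Phi_g^2\big((X,\gamma_X^h),(Y,\gamma_Y^k)\big) := \varphi_g^2(X,Y), \qquad \Phi_g^0 := \varphi_g^0 .
\]
By Lemma \ref{morph_lemma} the morphism $\varphi_g^{-2}(X,Y)$ lies in $\cZ_G(\cC)$, hence so does its inverse $\varphi_g^2(X,Y)$, and likewise $\varphi_g^0$; since every $\varphi_g$ is a strong comonoidal auto-equivalence these are isomorphisms. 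Because on underlying objects and morphisms $\odot$ is just the tensor product of $\cC$ and $\Phi_g$ is just $\varphi_g$, naturality of $\Phi_g^2$ and the comonoidal coherences (\ref{comonoidal_coherence_1}) and (\ref{comonoidal_coherence_2}) for $\Phi_g$ follow verbatim from those of $\varphi_g$.

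For part 2, I would define the compositors componentwise by $\Phi_{k,\ell,(X,\gamma_X^g)} := \varphi_{k,\ell,X}$. Lemma \ref{morph_lemma} shows each such component is an isomorphism in $\cZ_G(\cC)$ from $(\Phi_k^{\ell g\ell\iv}\Phi_\ell^g)(X)$ to $\Phi_{k\ell}^g(X)$, which is exactly the source and target of a compositor; that $\Phi_{k,\ell}$ is a comonoidal transformation relative to the structures of part 1 is inherited from $\varphi_{k,\ell}$ being comonoidal. The action axioms (\ref{group_action_axioms_1}) and (\ref{group_action_axioms_2}) for $\Phi$ have underlying components identical to the same axioms for $\varphi$, so they hold; and the crossing property $\Phi_g(\cZ_h(\cC)) \subset \cZ_{ghg\iv}(\cC)$ is built into the definition (\ref{Phi_g_h}) of $\Phi_g^h$ via Lemmas \ref{half_braiding_lemma} and \ref{half_braiding_action_lemma}.

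For part 3, I would set $c_{(X,\gamma_X^g),(Y,\gamma_Y^h)} := \gamma_{X,Y}^g$. By the last item of Lemma \ref{morph_lemma} this is a morphism in $\cZ_G(\cC)$ from $(X,\gamma_X^g)\odot(Y,\gamma_Y^h)$ to $\Phi_g^h(Y,\gamma_Y^h)\odot(X,\gamma_X^g)$, which is the shape required of a $G$-braiding, and it is an isomorphism since the half-braidings are strong (cf. Remark \ref{half_braiding_remark}.\ref{strong_center}); naturality in both arguments follows from naturality of $\gamma_X$ together with the defining condition of morphisms in $\cZ_G(\cC)$. Two of the three coherences are then immediate: diagram (\ref{hepta_1}) is precisely the defining formula (\ref{action_center_mult_def}) for the half-braiding on a $\odot$-product, so it holds by definition of the monoidal product, and diagram (\ref{hepta_2}) is exactly the first half-braiding axiom in (\ref{half-braiding-axioms}) for $\gamma_X^g$ with $F=\varphi_g$, $V=Y$, $W=Z$.

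I expect the main obstacle to be the third diagram (\ref{action_braid}), which encodes compatibility of the action $\Phi$ with the braiding. Here I would unfold the definition (\ref{Phi_g_h}) of the transported half-braiding $\gamma_{k(X)}$, substitute it into $c_{\Phi_k X,\Phi_k Y}$, and chase the diagram using the half-braiding axioms (\ref{half-braiding-axioms}), naturality of the comonoidal structures $\varphi_k^{\pm 2}$, and---most importantly---the coherence of Remark \ref{action_remarks}, which lets me rewrite every composite of compositors $\varphi_{\bullet,\dots,\bullet}$ occurring in the conjugated gradings (those built from $\varphi_{kgk\iv,k}$, $\varphi_{k,g}$ and $\varphi_{k,g,k\iv}$) into a single normal form. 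I anticipate a diagram chase of the same flavor and length as those already carried out in Lemma \ref{morph_lemma}, with the extra bookkeeping coming from the conjugation $kgk\iv$ appearing in the target degree.
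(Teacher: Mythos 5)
Your proposal matches the paper's proof essentially verbatim: the same comonoidal structure $\Phi_g^2:=\varphi_g^2(X,Y)$, $\Phi_g^0:=\varphi_g^0$, the same compositors $\Phi_{k,\ell,(X,\gamma_X)}:=\varphi_{k,\ell,X}$, and the same reduction of everything to Lemma \ref{morph_lemma} plus coherences inherited from $\varphi$, with (\ref{hepta_1}) and (\ref{hepta_2}) holding by the definitions (\ref{action_center_mult_def}) and (\ref{half-braiding-axioms}). The only divergence is cosmetic: where you anticipate a diagram chase for (\ref{action_braid}), the paper asserts it holds by definition of $c_{\varphi_k(X),\varphi_k(Y)}$ via (\ref{Phi_g_h}) --- the short chase you describe (naturality of $\gamma_X$ plus Remark \ref{action_remarks}) is exactly what that assertion silently absorbs.
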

\begin{proof}
It is clear that $\Phi_g$ is a functor. The isomorphisms $\Phi_g^2((X,\gamma_X),(Y,\gamma_Y)) := \varphi_g^2(X,Y)$ and $\Phi^0_g := \varphi^0_g$ are morphisms in $\cZ_G(\cC)$
due to the previous Lemma. That they define a strong comonoidal structure on $\Phi_g$ is immediate.\\
Also by Lemma \ref{morph_lemma} $\Phi_{g,h,(X,\gamma_X)} := \varphi_{g,h,X}$ is a morphism in $\cZ_G(\cC)$ and thus $\Phi$ an action. It is $G$-crossed by definition of $\Phi_g$.\\
Again from Lemma \ref{morph_lemma} we know that $c_{(X,\gamma^g_X),(Y,\gamma^h_Y)}$ is an isomorphism in $\cZ_G(\cC)$.
The $G$-braiding axioms (\ref{hepta_1}) and (\ref{hepta_2}) hold by definition (see (\ref{action_center_mult_def}) and (\ref{half-braiding-axioms})).
The last $G$-braiding axiom (\ref{action_braid}) holds by definition of $c_{\varphi_k(X),\varphi_k(Y)}$.
\end{proof}

We end this section with a comparison with a construction in \cite{Zun2004}.

\begin{Remark}
\label{Zunino_remark}
\begin{enumerate}
\item
Given a $G$-crossed category $\cC = \coprod_{g \in G}\cC_g$ with a strict $G$-action by strict monoidal functors $\varphi_g : \cC \rightarrow \cC$. 
Zunino constructed in \cite[Section 4]{Zun2004} a $G$-braided category $\cZ = \coprod_{g \in G}\cZ_g$ as follows:\\
Objects in the subcategory $\cZ_g$ are pairs $(X,\xi)$ where $X$ is an object in $\cC_g$ and a family of isomorphisms $\xi_V : X \otimes V \rightarrow \varphi_g(V) \otimes X$ in $\cC$ (called half-braiding), which is natural in $V \in \cC$ such that $\xi_{V \otimes W} = (\varphi_g(V) \otimes \xi_W) \circ (\xi_V \otimes W)$ for all $V,W \in \cC$.\\
Morphisms in $\cZ$ are morphisms in $\cC$ that commute with half-braidings.\\
The tensor product is defined on objects as follows: for $X \in \cC_g$ and $Y \in \cC_h$ 
\[
(X, \xi ) \otimes (Y, \zeta ) := (X \otimes Y, \eta)
\] 
where $\eta_V := (\xi_{\varphi_h(V)} \otimes Y) \circ (X \otimes \zeta_V)$.\\
The action of $g \in G$ on $\cZ$ is given by the functor $\Phi_g$ that maps $(X,\xi ) \in \cZ_h$ to the pair $(\varphi_g(X),g.\xi )$, 
here $g.\xi$ is the natural isomorphism with $V$-component
\[
\varphi_g (\xi_{\varphi_{g\iv}(V)}) : \varphi_g(X) \otimes \underbrace{\varphi_g (\varphi_{g\iv}V)}_{=V} \rightarrow \varphi_{ghg\iv} (V) \otimes \varphi_g(X).
\]
The $G$-braiding on $\cZ$ is given by $c_{(X,\xi),(Y,\zeta)} := \xi_{Y}$. This finishes our description of the category $\cZ$.
\item
Any monoidal category with $G$-action can be considered as $G$-crossed category with trivial $G$-grading $\cC = \cC_e$.
If we apply Zunino's construction to such a category, we obtain a $G$-braided category which is again concentrated in degree $e$. 
Hence for a non-trivial group $G$ our construction does not reduce to Zunino's construction, 
since the category $\cZ_G(\cC)$ has non-empty $g$-twisted components (cf. Remark \ref{coend_remark}).
\end{enumerate}
\end{Remark}
\section{The Hopf algebra case}
\label{Hopf-algebra-case}
Unless stated otherwise $H$ will always be a bialgebra over a field $\k$.
In this section we will describe the category $\cZ_G(\cC)$ for the monoidal category $\cC = H\text{-mod}$ and a $G$-action coming from comonoidal bialgebra automorphisms as described in Example \ref{action_by_comonoidal_automorphisms}. We will denote this category by $\cZ_G(H)$.
\subsection{Twisted Yetter-Drinfeld modules}
Let $(f,f^\pz)$ always be a comonoidal bialgebra homomorphism of $H$. 
As explained in Example \ref{comonoidal-automoph-example} this gives a comonoidal functor $F = (f,f^\pz)^*: \cC \rightarrow \cC$.
According to Remark \ref{coalgebra_preservation} co\-mo\-no\-i\-dal functors send coalgebras to coalgebras. 
Since the bialgebra $H$ is a coalgebra in $\cC$, the map $\Delta^{f^\pz} : a \mapsto f^\pz \cdot (a_{(1)} \otimes a_{(2)})$ defines a coassociative comultiplication with counit $\eps$ on $H$.
We will denote this coalgebra by $H^{f^\pz}$. Note that in general $H^{f^\pz}$, together with the multiplication of $H$, is not a bialgebra, but a right module-coalgebra.\\ 
We are now ready to define the algebraic structure, that describes the $F$-center $\cZ^F(\cC)$ in the case $\cC = H\text{-mod}$ and $F= (f,f^\pz)^*$.

\begin{Definition}
\label{twisted_Yetter_Drinfeld_defi}
\begin{enumerate}
\item
A $\k$-vector space $X$ together with an $H$-left action and an $H^{f^\pz}$-left coaction
is called \emph{$(f,f^\pz)$-Yetter-Drinfeld module} or \emph{$f$-Yetter-Drinfeld module} over $H$, if the equality
\begin{align}
\label{twisted_Yetter_Drinfeld_cond}
f(a_{(1)}) x_{(-1)} \otimes a_{(2)}  x_{(0)} = (a_{(1)}.x)_{(-1)}  a_{(2)} \otimes (a_{(1)}.x)_{(0)}
\end{align}
holds for all $a \in H$ and $x \in X$.
\item
Let $X$ and $Y$ be $f$-Yetter-Drinfeld modules over $H$. A $\k$-linear map $\varphi : X \rightarrow Y$ is called 
\emph{morphism of $f$-Yetter-Drinfeld modules}, if it commutes with the actions and coactions of $X$ and $Y$, i.e.
\begin{align*}
\varphi(a.x) = a.\varphi(x) \quad \text{and} \quad \varphi(x) = x_{(-1)} \otimes \varphi(x_{(0)}) \quad \text{for all $a \in H, x \in X$.}
\end{align*}
\item
The $f$-Yetter-Drinfeld modules and morphisms of $f$-Yetter-Drinfeld modules form a $\k$-linear category which we call ${}^H_H \YD_{(f,f^\pz)}$.
\end{enumerate}
\end{Definition}

\begin{Remark}
\label{Yetter-Drinfeld-Remark}
\begin{enumerate}
\item
For the comonoidal automorphism $(\id, 1_H \otimes 1_H)$ Definition \ref{twisted_Yetter_Drinfeld_defi} is the definition of a Yetter-Drinfeld module over the bialgebra $H$, (see for example Definition 10.6.10 in \cite{Mon93}).
\item
If $H$ is a Hopf algebra, condition (\ref{twisted_Yetter_Drinfeld_cond}) is equivalent to
\begin{align}
\label{twisted_Yetter_Drinfeld_alt}
(a.x)_{(-1)} \otimes (a.x)_{(0)} = f(a_{(1)}) x_{(-1)} S(a_{(3)}) \otimes a_{(2)}x_{(0)}\quad \text{for all $a\in H, x \in X$.}
\end{align}
\item
Let $X$ be an $f$-Yetter-Drinfeld module and $V$ an $H$-left module. Define the $\k$-linear map
\begin{align*}
\ov{\gamma}_{V} (x \otimes v) := x_{(-1)}.v \otimes x_{(0)}. 
\end{align*}
Note that $X$ is in particular an $H$-left module. One sees that $\ov{\gamma}_V$
is an $H$-linear map from $X \otimes V$ to $F(V) \otimes X$:
\begin{align*}
\ov{\gamma}_V(a.(x \otimes v)) &= (a_{(1)}.x)_{(-1)}a_{(2)}v \otimes (a_{(1)}.x)_{(0)}
\stackrel{(\ref{twisted_Yetter_Drinfeld_cond})}{=} f(a_{(1)}) x_{(-1)} v \otimes a_{(2)}x_{(0)}\\ 
&= a.\ov{\gamma}_V(x \otimes v).
\end{align*}
Given an $H$-linear map $\varphi : V \rightarrow W$ we have the equality $\ov{\gamma}_W \circ (\id \otimes \varphi) = (F(\varphi) \otimes \id) \circ \ov{\gamma}_V$, 
thus we get a natural transformation $\ov{\gamma} : X \otimes \_ \rightarrow F(\_) \otimes X$.
\item
Given another $H$-left module $W$, one easily sees the equalities
\begin{align*}
(\id_{FV} \otimes \ov{\gamma}_W)(\ov{\gamma}_V \otimes \id_W) &= (F^2(V,W) \otimes X)(\id_X \otimes \ov{\gamma}_{V \otimes W})\\
\text{and} \qquad \qquad \qquad \ov{\gamma}_\k &= \id_X.
\end{align*}
So the coaction of an $f$-Yetter-Drinfeld module $X$ defines a lax $F$-half braiding on the underlying $H$-left module of $X$.
\item
If $H$ is a Hopf algebra with invertible antipode, $\ov{\gamma}_V$ has an inverse, namely
\[
{\ov{\gamma}_V}^{-1}(v \otimes x) = x_{(0)} \otimes S^{-1}(x_{(-1)})v.
\]
\end{enumerate}
\end{Remark}

\begin{Proposition}
\label{yetter_drinfeld_center_prop}
Let $H$ be a Hopf-algebra with invertible antipode and $(f,f^\pz)$ a co\-mo\-no\-i\-dal bialgebra automorphism of $H$. 
Let $\cC$ be the monoidal category $H\text{-mod}$ and $F$ the strong comonoidal functor $(f,f^\pz)^*$.
The categories ${}^H_H\YD_{(f,f^\pz)}$ and $\cZ^F(\cC)$ are isomorphic as $\k$-linear categories.
\end{Proposition}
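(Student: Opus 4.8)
The plan is to exhibit mutually inverse $\k$-linear functors between ${}^H_H\YD_{(f,f^\pz)}$ and $\cZ^F(\cC)$, both acting as the identity on the underlying $\k$-linear maps. Since on either side a morphism is a $\k$-linear map subject to compatibility conditions, it then suffices to match the object data and to verify that the two bundles of conditions imposed on a morphism coincide; this automatically upgrades a bijection of objects to an isomorphism of $\k$-linear categories.

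In the direction $\mathcal{F}\colon {}^H_H\YD_{(f,f^\pz)}\to\cZ^F(\cC)$ I send a module $X$ to the pair $(X,\ov{\gamma})$, where $X$ carries its underlying $H$-module structure and $\ov{\gamma}_V(x\otimes v):=x_{(-1)}.v\otimes x_{(0)}$. Parts 3--5 of Remark \ref{Yetter-Drinfeld-Remark} do all the work here: part 3 gives $H$-linearity of each $\ov{\gamma}_V$ and naturality in $V$, part 4 gives the two half-braiding axioms (\ref{half-braiding-axioms}), and part 5 gives (using $S^{-1}$, which is where the hypothesis on the antipode is needed) that $\ov{\gamma}_V$ is invertible, so that $(X,\ov{\gamma})$ lies in $\cZ^F(\cC)$ and not merely in the lax center. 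On morphisms $\mathcal{F}$ is the identity, and a morphism of $f$-Yetter-Drinfeld modules is by definition an $H$-linear map commuting with the coactions, hence with the $\ov{\gamma}$'s.

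In the direction $\mathcal{G}\colon \cZ^F(\cC)\to {}^H_H\YD_{(f,f^\pz)}$ I keep the $H$-module structure on $X$ and define a coaction by evaluating the half-braiding on the left regular $H$-module, which I write $H$: set $\delta(x):=\gamma_{X,H}(x\otimes 1_H)\in F(H)\otimes X=H\otimes X$. The decisive observation, which simultaneously shows the two functors are mutually inverse, is the regular-representation trick: for any $H$-module $V$ and any $v\in V$ the map $r_v\colon H\to V$, $a\mapsto a.v$, is $H$-linear with $r_v(1_H)=v$, so naturality of $\gamma_X$ forces $\gamma_{X,V}(x\otimes v)=(F(r_v)\otimes X)(\delta(x))=x_{(-1)}.v\otimes x_{(0)}$. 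Thus $\gamma_X$ is completely determined by $\delta$ and has exactly the shape $\ov{\gamma}$, so $\mathcal{F}\circ\mathcal{G}=\id$; conversely $\ov{\gamma}_H(x\otimes 1_H)=x_{(-1)}\otimes x_{(0)}$ recovers $\delta$, so $\mathcal{G}\circ\mathcal{F}=\id$.

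The only real obstacle is checking that $\mathcal{G}$ is well defined, i.e. that $\delta$ is a genuine $H^{f^\pz}$-coaction satisfying the Yetter-Drinfeld compatibility (\ref{twisted_Yetter_Drinfeld_cond}). Counitality $(\eps\otimes\id)\delta=\id_X$ follows from the normalization axiom $(F^0\otimes X)\gamma_{X,\unit}=X$ in (\ref{half-braiding-axioms}) together with naturality of $\gamma_X$ along the $H$-linear counit $\eps\colon H\to\k$. Coassociativity with respect to the \emph{twisted} comultiplication $\Delta^{f^\pz}(a)=f^\pz\cdot(a_{(1)}\otimes a_{(2)})$ is where the comonoidal structure enters: applying the first axiom of (\ref{half-braiding-axioms}) to $V=W=H$ and the element $1_H\otimes 1_H$, the factor $F^2(H,H)$ acts as left multiplication by $f^\pz$ (Example \ref{comonoidal-automoph-example}) and reshuffles the right-hand side $(\id\otimes\delta)\delta$ into the left-hand side $(\Delta^{f^\pz}\otimes\id)\delta$. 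Finally, (\ref{twisted_Yetter_Drinfeld_cond}) is precisely the $H$-linearity of $\gamma_{X,H}$: writing out $\gamma_{X,H}\bigl(a.(x\otimes 1_H)\bigr)=a.\gamma_{X,H}(x\otimes 1_H)$ with the diagonal action on $X\otimes H$ on one side and on $F(H)\otimes X$ (where $a$ acts on $F(H)$ through $f$) on the other reproduces (\ref{twisted_Yetter_Drinfeld_cond}) term by term in Sweedler notation. Translating each half-braiding axiom into its Sweedler counterpart is routine, but the bookkeeping of exactly where $f$ and $f^\pz$ appear is the one genuinely delicate point.
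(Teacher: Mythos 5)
Your proposal is correct and follows essentially the same route as the paper: the same two assignments (evaluating a half-braiding on the regular module ${}_HH$ at $1_H$ to get the coaction, and the formula $x\otimes v\mapsto x_{(-1)}.v\otimes x_{(0)}$ in the other direction), the same regular-representation/naturality trick to see they are mutually inverse, and the same identifications of the coaction axioms and the Yetter--Drinfeld condition with the half-braiding axioms and $H$-linearity. You even spell out the counitality/coassociativity checks that the paper delegates to Kassel's Lemma XIII.5.2.
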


To this end we first prove the following Lemma, whose proof is similar to the one of Lemma XIII.5.2 in \cite{Kas95}.
\begin{Lemma}
Let $X = (X,\rho)$ be an $H$-left module and $\gamma_X$ an $F$-half-braiding on $X$ and denote by ${}_HH$ the regular $H$-left module, i.e. $H$ with left multiplication. 
The $\k$-linear map $\ov{\delta}: X \rightarrow H \otimes X$ given by $\ov{\delta}(x) := \gamma_{X,{}_HH}(x \otimes 1_H)$ equips $X$ with an $H^{f^\pz}$-left coaction.
The triple $(X,\rho, \ov{\delta})$ is an $f$-Yetter-Drinfeld module over $H$.
\end{Lemma}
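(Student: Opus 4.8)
The plan is to show first that the half-braiding $\gamma_X$ is completely determined by the single map $\ov\delta$, and then to read off the comodule axioms and the Yetter-Drinfeld condition by evaluating the defining properties of $\gamma_X$ on suitable test data, all built from the regular module ${}_HH$. This follows the strategy of Kassel's Lemma XIII.5.2; the only extra care is to track the twist by $f$ and $f^\pz$.

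First I would recover the universal formula for $\gamma_X$. For an $H$-module $V$ and $v \in V$ the map $r_v : {}_HH \to V$, $a \mapsto a.v$, is $H$-linear and sends $1_H$ to $v$. Since $F = (f,f^\pz)^*$ leaves underlying maps unchanged, naturality of $\gamma_X$ applied to $r_v$ and evaluated on $x \otimes 1_H$ gives
\[
\gamma_{X,V}(x \otimes v) = (F(r_v) \otimes X)(\ov\delta(x)) = x_{(-1)}.v \otimes x_{(0)},
\]
which is exactly the map $\ov\gamma_V$ of Remark \ref{Yetter-Drinfeld-Remark}; in particular $\gamma_{X,{}_HH}(x \otimes b) = x_{(-1)}b \otimes x_{(0)}$. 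This reduces every subsequent check to a manipulation with $\ov\delta$.

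Next I would extract the comodule axioms from the half-braiding axioms (\ref{half-braiding-axioms}) taken at $V = W = {}_HH$. Evaluating the second identity $(F^0 \otimes X)\gamma_{X,\unit} = X$ on $x$ (with $\unit = \k$, $a.1 = \eps(a)$) gives counitality $\eps(x_{(-1)})x_{(0)} = x$. For coassociativity I evaluate the first identity on $x \otimes 1_H \otimes 1_H$: the right-hand side produces $(\id \otimes \ov\delta)\ov\delta(x)$ directly, while on the left $\gamma_{X,{}_HH\otimes{}_HH}(x \otimes 1_H \otimes 1_H) = \Delta(x_{(-1)}) \otimes x_{(0)}$ and then $F^2({}_HH,{}_HH)$ multiplies the first two legs by $f^\pz$, converting $\Delta(x_{(-1)})$ into $f^\pz\cdot\Delta(x_{(-1)}) = \Delta^{f^\pz}(x_{(-1)})$; hence the left-hand side equals $(\Delta^{f^\pz} \otimes \id)\ov\delta(x)$. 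The axiom thus becomes precisely coassociativity of $\ov\delta$ over the twisted coalgebra $H^{f^\pz}$. This is the one step where the comonoidal structure $f^\pz$ genuinely enters, and it is what forces $\ov\delta$ to be an $H^{f^\pz}$-coaction rather than an ordinary $H$-coaction; I regard it as the conceptual heart of the argument, even though it is computationally short.

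Finally, the Yetter-Drinfeld condition comes from $H$-linearity of $\gamma_{X,{}_HH}$. Writing out $\gamma_{X,{}_HH}(a.(x \otimes b)) = a.\gamma_{X,{}_HH}(x \otimes b)$ with the diagonal action on $X \otimes {}_HH$ and the $f$-twisted diagonal action $a.(c \otimes y) = f(a_{(1)})c \otimes a_{(2)}.y$ on $F({}_HH) \otimes X$, and then specializing to $b = 1_H$, yields
\[
(a_{(1)}.x)_{(-1)}\,a_{(2)} \otimes (a_{(1)}.x)_{(0)} = f(a_{(1)})\,x_{(-1)} \otimes a_{(2)}.x_{(0)},
\]
which is exactly (\ref{twisted_Yetter_Drinfeld_cond}). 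Assembling the three computations shows that $(X,\rho,\ov\delta)$ is an $f$-Yetter-Drinfeld module. I do not expect any genuine obstacle beyond careful bookkeeping of the $f$- and $f^\pz$-twists once the reduction to ${}_HH$ via the maps $r_v$ is in hand.
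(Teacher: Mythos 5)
Your proof is correct and follows essentially the same route as the paper: recover the universal formula $\gamma_{X,V}(x\otimes v)=x_{(-1)}.v\otimes x_{(0)}$ by naturality against the maps ${}_HH\to V$, read off the comodule axioms from the half-braiding axioms, and obtain the Yetter--Drinfeld condition from $H$-linearity of $\gamma_{X,{}_HH}$ at $b=1_H$. The only difference is that you spell out the coassociativity step (where $F^2$ inserts $f^{(2)}$ and turns $\Delta$ into $\Delta^{f^{(2)}}$), which the paper delegates to Lemma XIII.5.2 of Kassel; your computation of that step is accurate.
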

\begin{proof}
Denote $\ov{\delta}(x) = x_{(-1)} \otimes x_{(0)}$.
For every $H$-module $V$ and every $v \in V$ there is a unique $H$-linear map $\ov{v} : {}_HH \rightarrow V, 1_H \mapsto v$, thus by naturality of $\gamma_X$ we have the equality
\begin{align}
\label{explicit_gamma}
\begin{split}
\gamma_{X,V}(x \otimes v) &= \gamma_{X,V} \circ (\id \otimes \ov{v})(x \otimes 1_H) = (F(\ov{v}) \otimes \id) \circ \gamma_{X,{}_HH} (x \otimes 1_H)
=((\ov{v} \otimes \id ) \circ \ov{\delta})(x) \\
&= x_{(-1)}.v \otimes x_{(0)}.
\end{split}
\end{align}
Thus $\ov{\delta} : X \rightarrow H \otimes X$ determines the whole $F$-half-braiding on $X$. 
That $\ov{\delta}$ is indeed an $H^{f^\pz}$-coaction on $X$ follows from the properties (\ref{half-braiding-axioms}) of an $F$-half-braiding. 
The details are as in the proof of Lemma XIII.5.2 in \cite{Kas95}.\\
That $X$ is an $f$-Yetter-Drinfeld module is due to the $H$-linearity of $\gamma_{X,V}$:\\
We have the equality $\gamma_{X,V}(a.(x \otimes v)) = a.\gamma_{X,V}(x \otimes v)$ for all $a \in H, x \in X$ and $v \in V$, thus by (\ref{explicit_gamma}) we get
\begin{align*}
(a_{(1)}.x)_{(-1)}a_{(2)}v \otimes (a_{(1)}.x)_{(0)} = \gamma_{X,V}(a.(x \otimes v)) = a.\gamma_{X,V}(x \otimes v) = f(a_{(1)})x_{(-1)}v \otimes a_{(2)}x_{(0)}.
\end{align*}
This equation specializes to the $f$-Yetter-Drinfeld condition (\ref{twisted_Yetter_Drinfeld_cond}), if we set $V = {}_HH$ and $v = 1_H$.
\end{proof}

\begin{proof}[Proof of Proposition \ref{yetter_drinfeld_center_prop}] 
We have seen in the Lemma above that an $F$-half-braiding on an $H$-module $X$ defines the structure of an $f$-Yetter-Drinfeld module on $X$. 
Conversely, in Remark \ref{Yetter-Drinfeld-Remark} we saw that any $f$-Yetter-Drinfeld module determines an $F$-half-braiding.\\ 
This suggests that the equivalence of categories we are looking for is given by mapping an $F$-half-braiding to the corresponding coaction, and vice versa. It only remains to check, that these assignments are mutually inverse to each other.\\
For the rest of this proof let $X$ be an $H$-module. Let $\gamma_X$ be an $F$-half-braiding on $X$ and $\ov{\delta}(x) = \gamma_{V,{}_HH}(x \otimes 1_H)$ the corresponding $H^{f^\pz}$-coaction. From (\ref{explicit_gamma}) we see that the $F$-half-braiding we obtain from $\ov{\delta}$ coincides with $\gamma_X$.\\
Conversely, start with an $H^{f^\pz}$-coaction $\delta$ on $X$. The $F$-half-braiding is given by $\ov{\gamma}_{X,V}(x \otimes v) = x_{(-1)}.v \otimes v_{(0)}$ and we get back $\delta$ is we set $V = {}_HH$ and $v = 1_H$.
\end{proof}


\subsection{The equivariant category}
The next proposition is the Hopf algebraic version of Theorem \ref{main_result}. 
We consider a $G$-action on the monoidal category $\cC = H\text{-mod}$ induced by 
comonoidal bialgebra automorphisms and gauge transformations as described in Example \ref{action_by_comonoidal_automorphisms}.\\
Before we state the proposition, we introduce notation we adapt from quasi-triangular Hopf algebras as discussed in \cite[p. 180]{Kas95}.
Let $(f,f^\pz)$ be a comonoidal bialgebra automorphism of $H$. 
Write for the finite sum $f^\pz = \sum_i f^\pz_{1,i} \otimes f^\pz_{2,i}$ simply $f^\pz = f^\pz_{1} \otimes f^\pz_{2}$.
The inverse of $f^\pz$ is written as $(f^\pz)^{-1} = \ov{f}^\pz_{1} \otimes \ov{f}^\pz_{2}$.

\begin{Proposition}
\label{main_Hopf_result}
Let $G$ be a group, $H$ a Hopf algebra over $\k$. Given for every $g,h \in G$ a comonoidal bialgebra automorphism $(f_g,f_g^\pz)$ and invertible elements $b_{g,h} \in H$ such that
\begin{enumerate}[(a)]
\item
$b_{g,h}$ is a gauge transformation from $(f_g,f_g^\pz) \star (f_h,f_h^\pz)$ to $(f_{gh},f_{gh}^\pz)$,
\item
for all $g,h,k \in G$ we have $b_{gh,k} \cdot f_k(b_{g,h}) = b_{g,hk} \cdot b_{h,k}$.
\end{enumerate}
The following holds
\begin{enumerate}
\item
The disjoint union $\cZ_G(H) := \coprod_{g \in G}{}^H_H \YD_{(f_g,f_g^\pz)}$ of categories is a $G$-graded monoidal category.
For $X$ an $f_g$-Yetter-Drinfeld modules and $Y$ an $f_h$-Yetter-Drinfeld the monoidal product is the $f_{gh}$-Yetter-Drinfeld module
with underlying vector space $X \otimes Y$ and:
\begin{align*} 
\text{action:}   \qquad &a.(x \otimes y) := a_\pe x \otimes a_\pz y \qquad \qquad \text{ and}\\
\text{coaction:} \qquad &\delta(x \otimes y) := b_{g,h}\cdot f_h(x_\me)y_\me \otimes x_\0 \otimes y_\0 \qquad a \in H, x \in X, y \in Y.
\end{align*}
\item
There is a $G$-crossed action $\Phi$ of $G$ on $\cZ_G(H)$. For $h \in G$ the functor $\Phi_h$ sends an $f_g$-Yetter-Drinfeld module $X$ to 
the $f_{hgh\iv}$-Yetter-Drinfeld module with underlying vector space $X$ and
\begin{align*} 
\text{action:}   \qquad &a.x := f_h(a).x \qquad \qquad \text{ and}\\
\text{coaction:} \qquad &\delta(x) := 
b \otimes f^\pz_{h,2}\cdot (\ov{f}^\pz_{h,1} x)_\0 \qquad a \in H, x \in X \text{ and }\\
&b = b_{h,gh\iv} \cdot b_{g,h\iv} \cdot (f_{h,1}^\pz ) \cdot f_{h\iv}((\ov{f}^\pz_{h,1}x)_\me ) \cdot \ov{f}^\pz_{h,2} \cdot b_{h,h\iv}^{-1}
\end{align*}
The functor $\Phi_h$ is comonoidal with $\Phi_h^2(X,Y) (x \otimes y) := f^\pz_h.(x \otimes y)$. The $H$-linear maps $\Phi_{g,h,X} (x) := b_{g,h}x$ define comonoidal isomorphisms, 
such that $\Phi$ defines a categorical action of $G$ on $\cZ_G(H)$.
\item
The $G$-braiding is given by the following family of isomorphisms: Let $X$ be an $f_g$-Yetter-Drinfeld module and $Y$ an $f_h$-Yetter-Drinfeld module
\begin{align*}
c_{X,Y} : X \otimes Y &\rightarrow \Phi_g(Y) \otimes X \\
          x \otimes y &\mapsto x_\me y \otimes x_\0. 
\end{align*}
\end{enumerate}
\end{Proposition}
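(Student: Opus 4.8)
The plan is to deduce the proposition from the categorical Theorem \ref{main_result} by transport of structure along the isomorphism of Proposition \ref{yetter_drinfeld_center_prop}. First I would observe that the given data are precisely those of Example \ref{action_by_comonoidal_automorphisms}: conditions (a) and (b) are verbatim the two numbered hypotheses there, so setting $\varphi_g := (f_g,f_g^\pz)^*$ and $\varphi_{g,h,X}(x) := b_{g,h}.x$ defines an action $\varphi : \un{G} \rightarrow \END^\otimes(\cC)$ on $\cC = H\text{-mod}$. Theorem \ref{main_result} then produces the $G$-braided category $\cZ_G(\cC) = \coprod_g \cZ_g(\cC)$, and Proposition \ref{yetter_drinfeld_center_prop} identifies each homogeneous piece $\cZ_g(\cC) = \cZ^{\varphi_g}(\cC)$ with ${}^H_H\YD_{(f_g,f_g^\pz)}$. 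The task is therefore to push the abstract monoidal product $\odot$, the action $\Phi$, and the braiding $c$ through these isomorphisms and check that they are computed by the stated Hopf-algebraic formulas. Since every structure map in sight is the image of a genuine object or morphism of $\cZ_G(\cC)$, all the (twisted) Yetter-Drinfeld axioms hold automatically and require no separate verification.

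The computational principle throughout is the dictionary of Proposition \ref{yetter_drinfeld_center_prop} and Remark \ref{Yetter-Drinfeld-Remark}: a coaction $\delta(x) = x_\me \otimes x_\0$ corresponds to the half-braiding $\gamma_{X,V}(x \otimes v) = x_\me.v \otimes x_\0$, and conversely $\delta$ is recovered by $\delta(x) = \gamma_{X,{}_HH}(x \otimes 1_H)$. Thus to read off any Hopf-algebraic coaction I evaluate the corresponding abstract half-braiding on the regular module $V = {}_HH$ at $v = 1_H$ and simplify. For part 1, I would substitute the concrete $\gamma^g$, $\gamma^h$ and $\varphi_{g,h,V} = (w \mapsto b_{g,h}.w)$ into the product formula (\ref{action_center_mult_def}); the action on $X \otimes Y$ is the diagonal action $a.(x\otimes y) = a_\pe x \otimes a_\pz y$ coming from $\Delta$, and evaluating the coaction at $V = {}_HH$, $v = 1_H$ yields $\delta(x\otimes y) = b_{g,h}\cdot f_h(x_\me) y_\me \otimes x_\0 \otimes y_\0$, where the twist $f_h$ enters because the relevant instance of $\gamma^g$ is taken on $\varphi_h(V)$, whose action is pulled back along $f_h$. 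Part 3 is immediate: by Lemma \ref{morph_lemma} the braiding is $c_{X,Y} = \gamma_{X,Y}$, and the explicit half-braiding of Remark \ref{Yetter-Drinfeld-Remark} gives exactly $x \otimes y \mapsto x_\me y \otimes x_\0$.

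The heart of the proof is part 2, the action $\Phi_h$. On underlying modules $\Phi_h$ is $\varphi_h = f_h^*$, so its action is $a.x = f_h(a).x$; its comonoidal structure $\Phi_h^2 = \varphi_h^2$ acts by $f_h^\pz$ and its compositors $\Phi_{g,h}$ by $b_{g,h}$, all immediate from Theorem \ref{main_result} and Lemma \ref{morph_lemma}. The coaction requires unwinding (\ref{Phi_g_h}) (with acting element $h$ and source degree $g$). Here I would read $\varphi^{-1}_{h,h\iv,V}$ as left multiplication by $b_{h,h\iv}^{-1}$, interpret the twisted morphism $h.(\gamma^g_{X,h\iv(V)})$ via Remark \ref{comonoidal_functor_remark} as the half-braiding pre- and post-composed with $\varphi_h^{\pm 2}$, i.e. with $f_h^\pz$ and its inverse $(f_h^\pz)\iv = \ov{f}^\pz_{h,1} \otimes \ov{f}^\pz_{h,2}$, and expand the iterated compositor $\varphi_{h,g,h\iv,V}$ as a product of gauge elements using Remark \ref{action_remarks}. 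Substituting $V = {}_HH$, $v = 1_H$ and collecting all scalar factors into a single element then produces the stated $\delta(x) = b \otimes f^\pz_{h,2}\cdot(\ov{f}^\pz_{h,1}x)_\0$ with the displayed $b$.

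The main obstacle is exactly this bookkeeping in part 2. Several conventions must be tracked simultaneously: the reversal of composition order for the pullback functors (Example \ref{composition_example}), the repeated insertion of $f_h^\pz$ and $(f_h^\pz)\iv$ coming from the comonoidal structure of $\varphi_h$, the appearance of the antipode through the inverse half-braiding on the regular module (Remark \ref{Yetter-Drinfeld-Remark}), and the rewriting of $\varphi_{h,g,h\iv}$ into the elements $b_{h,gh\iv}$, $b_{g,h\iv}$ and $b_{h,h\iv}^{-1}$ via Remark \ref{action_remarks}. A useful final consistency check is that for the trivial data $f_g = \id$, $f_g^\pz = 1_H \otimes 1_H$, $b_{g,h} = 1_H$ all three formulas collapse to the ordinary Yetter-Drinfeld description of the Drinfeld center; this catches order and inverse errors in the collected element $b$.
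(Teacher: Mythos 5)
Your proposal is correct and follows exactly the paper's own route: the paper likewise deduces the proposition from Theorem \ref{main_result} and Proposition \ref{yetter_drinfeld_center_prop} by transporting the structure of $\cZ_G(\cC)$ along the isomorphism that evaluates a half-braiding on the regular module ${}_HH$, which is precisely your dictionary $\delta(x) = \gamma_{X,{}_HH}(x \otimes 1_H)$. In fact your write-up is more detailed than the paper's two-line proof, in particular in flagging the bookkeeping needed to extract the element $b$ in part 2 and in proposing the consistency check against the untwisted case.
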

\begin{proof}
The proposition follows from Theorem \ref{main_result} and Proposition \ref{yetter_drinfeld_center_prop} by using the isomorphism of categories
$\cZ_g(\cC) \rightarrow {}^H_H\YD_{(f,f^\pz)}$, which is essentially evaluating a half-braiding on the regular left module ${}_HH$.
This yields the explicit formulas for the coactions above.
\end{proof}

\begin{Remark}
\label{Vir_remark}
Assume that a group $G$ acts on a Hopf algebra $H$ by an anti-group homomorphism $\phi : G \rightarrow \Aut_{\rm{Hopf}}(H)$, i.e. for all $g,h \in G$ we have $\phi_{gh} = \phi_h \phi_g$. 
The map $\phi$ can also be seen as a group homomorphism $\phi : G^{\rm{op}} \rightarrow \Aut_{\rm{Hopf}}(H)$, where $G^{\rm{op}}$ is the group $G$ with opposed multiplication $g \cdot_{\op} h := hg$.\\
In this case $G$ acts on the monoidal category $\cC = H\text{-mod}$ by the functors $\phi_g^*$ and the category $\cZ_G(H)$ is the following $G$-braided category:
\begin{itemize}
\item
The tensor product of a $\phi_g$-Yetter-Drinfeld module $X$ and a $\phi_h$-Yetter-Drinfeld module $Y$ is the vector space $X \otimes Y$ with
\begin{align*}
\text{action:}   \qquad &a.(x \otimes y) = a_\pe x \otimes a_\pz y \qquad \qquad \text{ and}\\
\text{coaction:} \qquad &\delta(x \otimes y) = \phi_h(x_\me)y_\me \otimes x_\0 \otimes y_\0 \qquad a \in H, x \in X, y \in Y.
\end{align*}
\item
The functor $\Phi_h$ evaluated on a $\phi_g$-Yetter-Drinfeld module $X$ is the vector space $X$ with
\begin{align*} 
\text{action:}   \qquad &a.x = \phi_h(a).x \qquad \qquad \text{ and}\\
\text{coaction:} \qquad &\delta(x) = 
\phi^{-1}_h(x_\me ) \otimes x_\0 \qquad a \in H, x \in X.
\end{align*}
\item
The $G$-braiding is exactly as in Proposition \ref{main_Hopf_result}.
\end{itemize}
Note that the composition $(f,f^\pz ) \star (g,g^\pz)$ of the comonoidal bialgebra automorphisms $(f,f^\pz )$ and $(g,g^\pz)$ was defined as $g \circ f$ in the first component (see Example \ref{composition_example}). 
Thus the whole construction of $\cZ_G(H)$ fits better into the setting of an anti-group homomorphism than a group homomorphism from $G$ to $\Aut_{\rm{Hopf}}(H)$.
\end{Remark}
\subsection{Connection to Virelizier's Hopf-coalgebra}
We now describe the connection of our category $Z_G(H)$ to the Hopf $G$-coalgebra in \cite{Vir2003}. First some terminology:
Let $A$ and $B$ be bialgebras. A bilinear map $\sigma : A \times B \rightarrow \k$ is called Hopf-pairing, if the following equalities hold for all $a,b \in A$ and $x,y \in B$:
\begin{align*}
\sigma(a \cdot b, x) &= \sigma(a, x_{(2)}) \sigma(b,x_{(1)}), \\
\sigma(a,x\cdot y) &= \sigma(a_{(1)},x) \sigma(a_{(2)},y),\\
\sigma(1_A, x) &= \eps(x) \quad \sigma(a,1_B) = \eps(a).
\end{align*}
The pairing $\sigma$ is called \emph{non-degenerate}, if $A \rightarrow B^*, a \mapsto \sigma(a,\cdot )$ and $B \rightarrow A^*, x \mapsto \sigma(\cdot,x)$ are isomorphisms of vector spaces for all $a \in A \setminus \menge{0}$ and $x \in B \setminus \menge{0}$.\\
In \cite{Vir2003} Virelizier defined for two Hopf algebras $A,B$, a Hopf algebra automorphism $f : A \rightarrow A$ and a Hopf-pairing $\sigma : A \times B \rightarrow \k$
an algebra $D(A,B;\sigma,f)$. This algebra has the underlying vector space $A \otimes B$ and the multiplication is given by
\begin{align*}
(a \otimes x)\cdot (b \otimes y) = \sigma (f(b_{(1)}), S(x_{(1)}))\sigma (b_{(3)}, x_{(3)}) ab_{(2)} \otimes x_{(2)}y.
\end{align*}
The unit of this multiplication is $1_A \otimes 1_B$.\\
Let $H$ be a finite dimensional Hopf algebra. 
Recall that the dual space $H^*$ is a Hopf algebra with multiplication $(\varphi \cdot \psi)(a) := \varphi(a_{(1)}) \psi(a_{(2)})$ 
and comultiplication $(\varphi_{(1)} \otimes \varphi_{(2)})(a \otimes b) := \varphi(a\cdot b)$.
It is easy to see that the pairing $\ev : H \times (H^*)^{\rm{cop}} \rightarrow \k, (a, \varphi) \mapsto \varphi(a)$ is a non-degenerate Hopf pairing.
The antipode of $(H^*)^{\rm{cop}}$ is $\cS = (S\iv)^*$ and thus there is an associative product on the vector space $H \otimes H^*$ given by
\begin{align}
\label{D_multi}
\begin{split}
(a \otimes \varphi) \cdot (b \otimes \psi ) :&= \ev (f(b_{(1)}), \cS (\varphi_{(3)})) \ev (b_{(3)}, \varphi_{(1)}) ab_{(2)} \otimes \varphi_{(2)} \psi\\ 
&=\varphi_{(3)}((S\iv f)(b_{(1)}))\varphi_{(1)}(b_{(3)}) ab_{(2)} \otimes \varphi_{(2)}\psi \\
&=ab_{(2)} \otimes \varphi (b_{(3)} \cdot ? \cdot (S\iv f)(b_{(1)})) \psi.
\end{split}
\end{align}
We will show that for a Hopf-algebra automorphism $f : H \rightarrow H$ an $f$-Yetter-Drinfeld module is the same as a $D(H,(H^*)^{\rm{cop}};\ev, f)$-module.
More precisely we have
\begin{Proposition}
\label{Vir_prop}
Let $f : H \rightarrow H$ be a bialgebra automorphism of a finite dimensional Hopf-algebra $H$ and let $D_f$ be the associative algebra $D(H,(H^*)^{\rm{cop}};\ev, f)$. 
The categories ${}_H^H\YD_{(f,1)}$ and $D_f$-mod are isomorphic as $\k$-linear categories.
\end{Proposition}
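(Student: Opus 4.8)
The plan is to exhibit $D_f$ as an algebra assembled from the two subalgebras $H$ and $H^*$, and then to translate a left $D_f$-action into the pair (action, coaction) of an $f$-Yetter-Drinfeld module. First I would record the two maps $i_H : H \to D_f$, $a \mapsto a \otimes \eps$, and $i_{H^*} : H^* \to D_f$, $\varphi \mapsto \unit \otimes \varphi$. A direct evaluation of the product (\ref{D_multi}) shows that both are algebra homomorphisms (here $H^*$ carries its ordinary convolution product, which is also the product underlying $(H^*)^{\rm cop}$) and that $i_H(a)\, i_{H^*}(\varphi) = a \otimes \varphi$. Thus the multiplication map $H \otimes H^* \to D_f$ built from the two embeddings is just the identity on the underlying space $H \otimes H^*$, so giving a left $D_f$-module is the same as giving a vector space $X$ with a left $H$-module structure and a left $H^*$-module structure which together satisfy the single straightening relation produced by the commutation of $i_{H^*}(\varphi)$ past $i_H(a)$.

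Next I would compute that relation explicitly. Using (\ref{D_multi}) one gets
\[
(\unit \otimes \varphi)(a \otimes \eps) = a_\pz \otimes \varphi\bigl(a_\pd \cdot ? \cdot (S\iv f)(a_\pe)\bigr),
\]
so, writing $\varphi \ast x := (\unit \otimes \varphi).x$ for the $H^*$-action, the two actions on $X$ must obey
\[
\varphi \ast (a.x) = a_\pz.\bigl(\varphi(a_\pd \cdot ? \cdot (S\iv f)(a_\pe)) \ast x\bigr).
\]
Then I would use finite dimensionality of $H$ to trade the $H^*$-action for an $H$-coaction: the category of left $H^*$-modules is isomorphic to that of left $H$-comodules, and I would take the isomorphism under which the action reads $\varphi \ast x = \varphi(S\iv(x_\me))\, x_\0$ for $\delta(x) = x_\me \otimes x_\0$. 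This is genuinely a \emph{left} action because $S\iv$ is an anti-coalgebra map, and it is available because the antipode of a finite dimensional Hopf algebra is bijective.

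Substituting this formula into the straightening relation, combining antipodes via
\[
a_\pd\, S\iv(x_\me)\, S\iv(f(a_\pe)) = S\iv\bigl(f(a_\pe)\, x_\me\, S(a_\pd)\bigr),
\]
and using non-degeneracy of $\ev$ to cancel $\varphi$, the relation becomes exactly
\[
(a.x)_\me \otimes (a.x)_\0 = f(a_\pe)\, x_\me\, S(a_\pd) \otimes a_\pz x_\0,
\]
which is the Hopf-algebraic form (\ref{twisted_Yetter_Drinfeld_alt}) of the $f$-Yetter-Drinfeld condition (\ref{twisted_Yetter_Drinfeld_cond}). On morphisms, a $D_f$-linear map is precisely a linear map commuting with $i_H$ and $i_{H^*}$, hence with the $H$-action and the $H$-coaction, i.e.\ a morphism of $f$-Yetter-Drinfeld modules. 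The two assignments are therefore mutually inverse $\k$-linear functors that are the identity on underlying maps, establishing the isomorphism of categories.

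The main obstacle is the bookkeeping in the last step: one must pin down the correct convention for the equivalence between $H^*$-modules and $H$-comodules (the placement of $S\iv$) and then verify that the three-fold coproduct $a_\pe \otimes a_\pz \otimes a_\pd$ together with the factor $S\iv f$ in (\ref{D_multi}) collapses to exactly the right-hand side of (\ref{twisted_Yetter_Drinfeld_alt}). The appearance of $(H^*)^{\rm cop}$ and of the antipode $\cS = (S\iv)^*$ in the definition of $D_f$ is precisely what makes this match succeed, so although the computation is routine it must be carried out carefully; everything else reduces to the standard fact that modules over a smash-type product of two algebras are pairs of modules over the factors subject to one commutation relation.
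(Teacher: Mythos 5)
Your argument is correct and follows essentially the same route as the paper's: the paper likewise uses the embeddings $a \mapsto a \otimes \eps$ and $\varphi \mapsto 1_H \otimes \varphi$, the $S^{\iv}$-twisted dictionary between left $H^*$-modules and left $H$-comodules, and the identification of the resulting compatibility with condition (\ref{twisted_Yetter_Drinfeld_alt}). The only difference is presentational — the paper checks both directions by explicit computation in two lemmas, while you package the same computation via the factorization of $D_f$ through its two subalgebras and a single straightening relation.
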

We prove this with the help of two lemmas.
\begin{Lemma}
\label{YD_Df}
Given an $f$-Yetter-Drinfeld module $X$. The linear map $\rho : H \otimes H^* \otimes X \rightarrow X$ defined by
\[
\rho(a \otimes \varphi \otimes x) := (\varphi S\iv )( x_{(-1)})ax_{(0)}
\]
is a $D_f$-action on $X$.
\end{Lemma}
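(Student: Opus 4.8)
The plan is to check directly the two defining axioms of a left $D_f$-module for $\rho$: that the unit $1_H \otimes \eps$ of $D_f$ acts as the identity, and that $\rho$ respects the multiplication (\ref{D_multi}). Throughout I use that $H$ is finite dimensional, so its antipode $S$ is bijective, and that $S\iv$ is simultaneously an algebra and a coalgebra anti-homomorphism. The unit axiom is immediate: evaluating gives $\rho((1_H \otimes \eps) \otimes x) = \eps(S\iv(x_\me))\, x_\0$, and since $\eps \circ S\iv = \eps$, combined with counitality of the coaction, this equals $\eps(x_\me)\,x_\0 = x$.

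For compatibility with the product I would compare the two ways of acting successively by $b \otimes \psi$ and then $a \otimes \varphi$. Writing $y := \rho((b\otimes\psi)\otimes x) = \psi(S\iv(x_\me))\,b.x_\0$, the key step is to compute its coaction. Applying the alternative Yetter--Drinfeld identity (\ref{twisted_Yetter_Drinfeld_alt}) to the element $b.x_\0$ and using coassociativity of the coaction yields
\[
\delta(y) = \psi(S\iv(x_\mz))\, f(b_{(1)})\,x_\me\,S(b_{(3)}) \otimes b_{(2)}.x_\0 .
\]
Substituting into $\rho((a\otimes\varphi)\otimes y) = \varphi(S\iv(y_\me))\,a.y_\0$ and using that $S\iv$ reverses products turns the comodule leg $f(b_{(1)})\,x_\me\,S(b_{(3)})$ into $b_{(3)}\,S\iv(x_\me)\,(S\iv f)(b_{(1)})$, giving
\[
\rho\big((a\otimes\varphi)\otimes y\big) = \varphi\big(b_{(3)}\,S\iv(x_\me)\,(S\iv f)(b_{(1)})\big)\,\psi(S\iv(x_\mz))\; a b_{(2)}.x_\0 .
\]
On the other hand, expanding $(a\otimes\varphi)\cdot(b\otimes\psi)$ by (\ref{D_multi}) and then applying $\rho$, now using $S\iv$ as a coalgebra anti-homomorphism to split $S\iv(x_\me)$ across the two legs of the conjugating functional, produces exactly the same expression, which establishes the module axiom.

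The main obstacle is the order-bookkeeping in these two displays. The Yetter--Drinfeld condition inserts the three-fold product $f(b_{(1)})\,x_\me\,S(b_{(3)})$ into the comodule leg, and one must track carefully how $S\iv$ reverses it so that $f(b_{(1)})$ and $S(b_{(3)})$ become $(S\iv f)(b_{(1)})$ and $b_{(3)}$ sitting on opposite sides of $S\iv(x_\me)$ --- precisely the conjugation $b_{(3)}\cdot ? \cdot (S\iv f)(b_{(1)})$ appearing in the $D_f$-multiplication (\ref{D_multi}). Aligning the outer leg $x_\mz$ of the iterated coaction with $\psi$ and the inner leg $x_\me$ with $\varphi$ is the other point requiring attention; once these are matched the two sides coincide verbatim.
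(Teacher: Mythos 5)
Your proof is correct and follows essentially the same route as the paper: unitality via $\eps\circ S\iv=\eps$ and counitality, and the module axiom by applying the alternative Yetter--Drinfeld identity (\ref{twisted_Yetter_Drinfeld_alt}) to $b.x_\0$, reversing the threefold product with the anti-algebra property of $S\iv$, and matching the result against the conjugation form of the multiplication (\ref{D_multi}). The only difference is cosmetic: you make explicit the coalgebra anti-homomorphism property of $S\iv$ used to split $S\iv(x_\me)$ across the two functionals, which the paper leaves implicit in its final comparison with (\ref{D_multi}).
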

\begin{proof}
We proof associativity of $\rho$: Let $a,b \in H, \varphi,\psi \in H^*$ and $x\in X$
\begin{align*}
(a \otimes \varphi ).((b \otimes \psi ).x )
=&\psi (S\iv x_{(-1)}) (\varphi S\iv) ( (b.x_{(0)})_{(-1)}) a (b.x_{(0)})_{(0)}\\
\stackrel{(\ref{twisted_Yetter_Drinfeld_alt})}{=} &\psi (S\iv x_{(-2)}) (\varphi S\iv) ( f(b_{(1)}) x_{(-1)} S(b_{(3)})) a b_{(2)}x_{(0)}\\
\stackrel{(*)}{=}&\psi (S\iv (x_\mz )) \varphi (b_{(3)} \cdot S\iv (x_\me ) \cdot (S\iv f)(b_{(1)}) ) a b_{(2)}x_{(0)}\\
\stackrel{(\ref{D_multi})}{=}& ((a \otimes \varphi ) \cdot (b \otimes \psi )).x.
\end{align*}
For $(*)$ we used that $S\iv$ is an anti-algebra homomorphism. The unitality of $\rho$ follows easily: For all $x \in X$ we have
\begin{align*}
(1 \otimes \eps ).x = \eps(S\iv(x_\me))1x_\0 =\eps(x_\me)x_\0 = x.
\end{align*}
\end{proof}

\begin{Lemma}
\label{Df_YD}
Let $X$ be a $D_f$-module and $\menge{a_i} \subset H$ a basis of $H$ with dual basis $\menge{a^i}$ of $H^*$ with respect to $\ev : H \otimes H^* \rightarrow \k$. 
Then $X$ is an $f$-Yetter-Drinfeld module with
\begin{align}
\text{action:}   \quad a.x &:= (a \otimes \eps )x\\
\label{Vir_coaction}
\text{coaction:} \quad \delta(x) &:= \sum_{i} S(a_i) \otimes (1 \otimes a^i).x \quad \text{ with } a \in H, x \in X.
\end{align}
\end{Lemma}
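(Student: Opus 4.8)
The plan is to verify the three data required by Definition \ref{twisted_Yetter_Drinfeld_defi}: that $a.x := (a\otimes\eps)x$ is a genuine $H$-action, that $\delta$ is a coassociative, counital $H$-coaction (note $f^\pz=1$, so $H^{f^\pz}=H$ and $\delta$ is an ordinary $H$-comodule structure), and that the compatibility (\ref{twisted_Yetter_Drinfeld_cond}) holds. Throughout I would use the two dual-basis resummations $\sum_i a_i\, a^i(w)=w$ and $\sum_i S(a_i)\, a^i(w)=S(w)$ for $w\in H$, together with the fact that the canonical element $\sum_i a_i\otimes a^i$ is independent of the chosen basis, which makes all the formulas well defined.

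First I would read off from the multiplication (\ref{D_multi}) that $a\mapsto a\otimes\eps$ and $\varphi\mapsto 1\otimes\varphi$ are algebra maps into $D_f$: indeed $(a\otimes\eps)(b\otimes\eps)=ab\otimes\eps$ and $(1\otimes\varphi)(1\otimes\psi)=1\otimes\varphi\psi$, while the mixed products are $(a\otimes\eps)(1\otimes\varphi)=a\otimes\varphi$ and
\[
(1\otimes\varphi)(a\otimes\eps)=a_{(2)}\otimes\varphi\bigl(a_{(3)}\cdot?\cdot(S\iv f)(a_{(1)})\bigr).
\]
The first identity makes the prescribed $H$-action a module structure; the last is the commutation relation that will carry the Yetter--Drinfeld computation. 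For the coaction, counitality is immediate: applying $\eps$ to the first leg and using $\sum_i\eps(a_i)a^i=\eps$ turns $\delta$ into the action of the unit $1\otimes\eps$. Coassociativity I would obtain from the identity $\sum_i\Delta(a_i)\otimes a^i=\sum_{j,k}a_j\otimes a_k\otimes a^j a^k$, which says exactly that the canonical element intertwines the coproduct of $H$ with the product of $H^*$; applying $S$ to the first two legs, using $\Delta S=(S\otimes S)\Delta^\op$ and $(1\otimes a^j)(1\otimes a^i)=1\otimes a^j a^i$, both $(\Delta\otimes\id)\delta$ and $(\id\otimes\delta)\delta$ collapse to $\sum_{j,k}S(a_k)\otimes S(a_j)\otimes(1\otimes a^j a^k).x$.

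The substantial step, and the one I expect to be the main obstacle, is the compatibility condition, which I would check in the equivalent Hopf-algebra form (\ref{twisted_Yetter_Drinfeld_alt}). Computing $\delta(a.x)=\sum_i S(a_i)\otimes(1\otimes a^i)(a\otimes\eps).x$ and inserting the commutation relation yields $\sum_i S(a_i)\otimes(a_{(2)}\otimes\theta_i).x$ with $\theta_i=a^i\bigl(a_{(3)}\cdot?\cdot(S\iv f)(a_{(1)})\bigr)$; expanding $\theta_i$ back into the dual basis and only then resumming over $i$ via $\sum_i S(a_i)\,a^i(w)=S(w)$ with $w=a_{(3)}\,a_j\,(S\iv f)(a_{(1)})$ collapses the conjugation (using $S S\iv f=f$) to $f(a_{(1)})S(a_j)S(a_{(3)})$, which reproduces the right-hand side of (\ref{twisted_Yetter_Drinfeld_alt}). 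The delicate point is that $\theta_i$ feeds simultaneously into the first tensor leg through $S(a_i)$ and into the $D_f$-action on $x$, so the two occurrences of the summation index are coupled; the resummation is legitimate only after $\theta_i$ has been re-expanded in the basis, and the bookkeeping of the three Sweedler legs of $a$ through the conjugation by $(S\iv f)$ must be kept exact.
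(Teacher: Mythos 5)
Your proposal is correct and follows essentially the same route as the paper: restrict along the algebra embeddings $a\mapsto a\otimes\eps$ and $\varphi\mapsto 1\otimes\varphi$, observe that the $H^*$-action yields the $H$-coaction $\sum_i S(a_i)\otimes(1\otimes a^i).x$, and verify (\ref{twisted_Yetter_Drinfeld_alt}) by combining the mixed commutation relation in $D_f$ with the dual-basis resummation $\sum_i S(a_i)\,a^i(w)=S(w)$ and the anti-multiplicativity of $S$. The only difference is that you spell out counitality and coassociativity of $\delta$ explicitly where the paper cites the standard module-to-comodule correspondence, which is a harmless elaboration rather than a different argument.
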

\begin{proof}
Observe that $\iota : H \rightarrow D_f, a \mapsto a \otimes \eps$ and $\kappa : H^* \rightarrow D_f, \varphi \mapsto 1_H \otimes \varphi$ are injective algebra homomorphisms. 
By restriction along $\iota$ resp.\! $\kappa$ the $D_f$-module $X$ becomes an $H$ resp.\! $H^*$-module.\\
Any left $H^*$-module becomes a left $H$-comodule with coaction $\delta(x) := \sum_i S(a_i) \otimes a^i.x$. 
Hence (\ref{Vir_coaction}) defines an $H$-coaction on $X$.
We still have to check the $f$-Yetter-Drinfeld condition (\ref{twisted_Yetter_Drinfeld_alt}):
\begin{align*}
(ax)_\me \otimes (ax)_\0 =& \sum_i S(a_i) \otimes a^i.(a.x) = \sum_i S(a_i) \otimes (1 \otimes a^i)(a \otimes \eps ).x\\
  \glei{(\ref{D_multi})} & \sum_i S(a_i) \otimes (a_\pz \otimes a^i(a_\pd \cdot ? \cdot (S\iv f)a_\pe )).x\\
  \glei{(*)} & \sum_i S(a_\pd \cdot a_i \cdot (S\iv f)(a_\pe ) ) \otimes (a_\pz \otimes a^i ).x\\
  \glei{(**)}& \sum_i f(a_\pe ) S(a_i) S(a_\pd ) \otimes (a_\pz \otimes \eps ) .((1 \otimes a^i).x)\\
  =& f(a_\pe ) x_\me S(a_\pd) \otimes a_\pz x_\0
\end{align*}
Here we used for $(*)$ that $\menge{a_i}$ and $\menge{a^i}$ are dual bases and for $(**)$ that $S$ is an anti-algebra homomorphism and (\ref{D_multi}).
\end{proof}
\begin{proof}[Proof of Proposition \ref{Vir_prop}]
We have seen in Lemma \ref{YD_Df} that every $f$-Yetter-Drinfeld module can be assigned a $D_f$-module structure.
Conversely, by Lemma \ref{Df_YD} every $D_f$-module can be assigned an $f$-Yetter-Drinfeld modules structure.\\
It is clear that these two assignments are inverse to each other, hence the Proposition is proved.
\end{proof}

\begin{Remark}
Since the category of modules over an algebra is abelian, Proposition \ref{Vir_prop} additionally shows that the category of $f$-Yetter-Drinfeld modules is not only $\k$-linear but also abelian.
\end{Remark}

As mentioned in the Introduction, the representations of a quasi-triangular Hopf $G$-coalgebra define a $G$-braided category \cite[Chapter VIII]{tur10}.\\
In more detail, a quasi-triangular Hopf $G$-coalgebra $\cH$ is a family $\menge{H_g}_{g \in G}$ of associative algebras together with algebra homomorphisms $\Delta_{g,h} : H_{gh} \rightarrow H_g \otimes H_h$ for all $g,h \in G$, 
algebra isomorphisms $\varphi_g : H_h \rightarrow H_{ghg\iv}$ for all $g,h \in G$ (compatible with the $\Delta_{g,h}$)
and a family $R = \menge{R_{g,h} \in H_g \otimes H_h}_{g,h \in G}$ of invertible elements subject to relations with the $\Delta_{g,h}$ and $\varphi_k$.\\
Following \cite[Chapter VIII.1.7]{tur10} the category ${\rm Rep}(\cH)$ of representations of $\cH$ is defined as the disjoint union of the $\k$-linear categories $H_g\text{-mod}$. The $g$-homogeneous component of ${\rm Rep}(\cH)$ is $H_g\text{-mod}$ and 
the family $\Delta_{g,h}$ (also called comultiplication of $\cH$) is used to define the monoidal product on ${\rm Rep}(\cH)$ via pulling back the $H_g \otimes H_h$ module structure of $X \otimes Y$ along the algebra homomorphism $\Delta_{g,h}$ to an $H_{gh}$-module structure.\\
The action of $g \in G$ on ${\rm Rep}(\cH)$ is given by the pull-back functor $\varphi^*_{g^{-1}}$ and the $G$-braiding on $X$ in $H_g\text{-mod}$ and $Y$ in $H_h\text{-mod}$ is the linear map given by 
\begin{align}
\label{G-Braiding_def_R-matrix}
x \otimes y \mapsto \tau_{X,Y}(R_{g,h}.(x \otimes y)) \qquad \text{for }x \in X, y\in Y.
\end{align}
Here $\tau_{X,Y} : x \otimes y \mapsto y \otimes x$ is the tensor flip.\\
We now explain how the algebras $D(A,B;\sigma,f)$, mentioned in the beginning of this section, can be used to define a Hopf $G$-coalgebra.
Using the same conventions as before, let $A$ and $B$ be Hopf algebras and $\sigma: A \times B \rightarrow \k$ a Hopf-pairing. 
Given a group homomorphism $\psi : G \rightarrow \Aut_{\rm{Hopf}}(A)$ the family $\menge{D(A,B;\sigma,\psi_g)}_{g \in G}$ of associative algebras is a Hopf $G$-coalgebra with comultiplication (cf. \cite[Thm. 2.3]{Vir2003})
\[
\Delta_{g,h}(a \otimes x) = (\psi_h(a_\pe) \otimes x_\pe) \otimes (a_\pz \otimes x_\pz ).
\]
Now remember from Remark \ref{Vir_remark} that our categorical construction of the category $\cZ_G(H)$ is related to an anti-group homomorphism $\phi : G \rightarrow \Aut_{\rm{Hopf}}(H)$. 
We can modify Virelizier's comultiplication to get a Hopf $G$-coalgebra from this anti-group homomorphism:\\
Assume that the Hopf pairing $\sigma : A \times B \rightarrow \k$ is non-degenerate.
For every Hopf algebra automorphism $f : A\rightarrow A$ there is a unique Hopf algebra homomorphism $f^* : B \rightarrow B$ with
\[
\sigma (f(a),x) = \sigma (a, f^*(x)) \qquad \text{for all }a \in A, x \in B.
\]
Adapting the arguments in \cite[Thm. 2.3]{Vir2003}, one shows: The family
\begin{align}
\label{Vir_mult_mod}
\ov{\Delta}_{g,h} (a \otimes x) = (a_\pe \otimes \phi^*_h(x_\pe)) \otimes (a_\pz \otimes x_\pz )
\end{align}
of $\k$-linear maps is a coassociative comultiplication on the family 
\[
D(A,B;\sigma,\phi) = \menge{D(A,B;\sigma,\phi_g)}_{g \in G}.
\]
Further, the family of algebra isomorphisms $\varphi_g(a \otimes x) := \phi_g (a) \otimes \phi^*_{g\iv}(x)$ gives a crossing on $D(A,B;\sigma,\phi)$ in the sense of \cite[1.2]{Vir2003}, except for property (1.6) in that article: Instead of $\varphi_g \varphi_h = \varphi_{gh}$ we have $\varphi_h \varphi_g = \varphi_{gh}$. 
This is due to the fact that $\phi$ is an anti-homomorphism and not a homomorphism.\\ 
For the category of representations of  $D(A,B;\sigma,\phi)$ this has the following consequence: 
In our approach the action of $g \in G$ on the category ${\rm Rep}(D(A,B;\sigma,\phi))$ is given by the restriction functor $\varphi_g^*$ and not by the restriction functor $\varphi_{g\iv}^*$.\\ 
Now we again specialize to the case $A=H, B= (H^*)^{\rm{cop}}$ and $\sigma = \ev : H \times H^* \rightarrow \k$. 
Denote the Hopf $G$-coalgebra we get from an anti-homomorphism $\phi : G \rightarrow \Aut_{\rm{Hopf}}(H)$ by $D(H, \phi)$. 
We have the following result.
\begin{Theorem}
\label{Vir_theorem}
The category $\cZ_G(H)$ from Remark \ref{Vir_remark} is, as a $G$-crossed category, isomorphic to the category ${\rm Rep}(D(H,\phi))$ of representations of the $G$-crossed Hopf $G$-coalgebra $D(H,\phi) = \menge{D_{\phi_g}}_{g \in G}$ described above.
\end{Theorem}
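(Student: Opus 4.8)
The plan is to assemble the isomorphism from the componentwise statement of Proposition \ref{Vir_prop} and then to check, one structure at a time, that it is compatible with the data of a $G$-crossed (in fact $G$-braided) category. Applying Proposition \ref{Vir_prop} with $f = \phi_g$ for each $g \in G$ gives isomorphisms of $\k$-linear categories $\Xi_g : {}^H_H\YD_{(\phi_g,1)} \iso D_{\phi_g}\mod$; recall from Lemmas \ref{YD_Df} and \ref{Df_YD} that $\Xi_g$ turns a coaction $\delta(x) = x_\me \otimes x_\0$ into the $D_{\phi_g}$-action $(a \otimes \varphi).x = (\varphi S\iv)(x_\me)\,a x_\0$, with inverse recovering the coaction by $\delta(x) = \sum_i S(a_i) \otimes (1 \otimes a^i).x$. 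Setting $\Xi := \coprod_{g \in G}\Xi_g$ and using that $\Xi_g$ maps the $g$-homogeneous component onto the $g$-homogeneous component, we obtain at once a grading-preserving isomorphism of the underlying $\k$-linear categories of $\cZ_G(H)$ and ${\rm Rep}(D(H,\phi))$. It remains to verify that $\Xi$ intertwines the monoidal products, the crossed $G$-actions, and the braidings.

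For the monoidal structure I would take $X$ a $\phi_g$- and $Y$ a $\phi_h$-Yetter-Drinfeld module and compute the $D_{\phi_{gh}}$-module that $\Xi_{gh}$ attaches to $X \otimes Y$, feeding the product coaction $\delta(x \otimes y) = \phi_h(x_\me)y_\me \otimes x_\0 \otimes y_\0$ of Remark \ref{Vir_remark} into the action formula of Lemma \ref{YD_Df}. On the other side one computes the $D_{\phi_{gh}}$-action on $\Xi_g(X) \otimes \Xi_h(Y)$ defined by restriction along the modified comultiplication $\ov\Delta_{g,h}$ of (\ref{Vir_mult_mod}). Matching the two reduces to the defining transpose relation $\ev(\phi_h(a),\psi) = \ev(a,\phi_h^*(\psi))$, the fact that the Hopf automorphism $\phi_h$ commutes with $S^{\pm 1}$, and careful bookkeeping of the co-opposite coproduct of $(H^*)^{\rm{cop}}$ inside (\ref{D_multi}); the content is exactly that the $\phi_h^*$-twist on the first leg of $\ov\Delta_{g,h}$ is dual to the $\phi_h$-twist in front of $x_\me$ in the coaction of $X \otimes Y$.

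For the crossed action I would compare $\Phi_h$, which by Remark \ref{Vir_remark} sends a $\phi_g$-Yetter-Drinfeld module to the module with action $a.x = \phi_h(a).x$ and coaction $\delta(x) = \phi_h\iv(x_\me) \otimes x_\0$, with the functor that the crossing $\varphi_h(a \otimes \psi) = \phi_h(a) \otimes \phi^*_{h\iv}(\psi)$ of $D(H,\phi)$ induces on representations: the $\phi_h$ on the $H$-leg reproduces the twisted action, while the $\phi^*_{h\iv}$ on the $H^*$-leg reproduces, after translating through (\ref{Vir_coaction}), the coaction twisted by $\phi_h\iv$. For the braiding I would identify the quasi-triangular structure of $D(H,\phi)$ as the canonical element $R_{g,h} = \sum_i (1_H \otimes a^i) \otimes (S(a_i) \otimes \eps) \in D_{\phi_g} \otimes D_{\phi_h}$ and verify, using (\ref{G-Braiding_def_R-matrix}) together with (\ref{Vir_coaction}), that $\tau_{X,Y}\bigl(R_{g,h}.(x \otimes y)\bigr) = \sum_i S(a_i).y \otimes (1 \otimes a^i).x = x_\me.y \otimes x_\0$, which is precisely the braiding $c_{X,Y}$ of Proposition \ref{main_Hopf_result} and Remark \ref{Vir_remark}. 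This last check upgrades $\Xi$ to an isomorphism of $G$-braided categories.

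The step I expect to be the real obstacle is the monoidal one. There one must thread the transpose $\phi_h^*$, the antipode $S\iv$ produced by Lemma \ref{YD_Df}, and the co-opposite coproduct of $(H^*)^{\rm{cop}}$ simultaneously through (\ref{D_multi}) and (\ref{Vir_mult_mod}), and it is exactly here that the passage from Virelizier's group homomorphism to our anti-homomorphism $\phi$ is forced: the comultiplication must carry $\phi_h^*$ on the first factor rather than Virelizier's $\psi_h$, so that it dualizes correctly to the $\phi_h$ appearing in the coaction of a tensor product. Finite-dimensionality of $H$ is used throughout, both to form $(H^*)^{\rm{cop}}$ and to have the dual bases $\menge{a_i}, \menge{a^i}$ available in (\ref{Vir_coaction}) and in $R_{g,h}$.
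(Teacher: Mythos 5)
Your proposal follows essentially the same route as the paper: assemble the componentwise isomorphisms ${\sf F}_g$ from Lemma \ref{YD_Df}/Proposition \ref{Vir_prop} into $\coprod_g {\sf F}_g$, then check commutativity of the two squares for the monoidal product (via the transpose relation for $\phi_h^*$ and the comultiplication $\ov\Delta_{g,h}$) and for the crossed action (using that Hopf automorphisms commute with the antipode). Your additional braiding check is consistent with the paper, which however states the theorem only at the level of $G$-crossed categories and treats the $R$-matrix $\ov{R}_{g,h}=\sum_i S(a_i)\otimes\eps\otimes 1\otimes a^i$ in a separate remark, noting that it acts \emph{after} the flip rather than via the standard formula (\ref{G-Braiding_def_R-matrix}).
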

\begin{proof}
From Lemma \ref{YD_Df} we know that there is an isomorphism of categories ${\sf F}_g : {}^H_H\YD_{\phi_g} \rightarrow D_{\phi_g}\text{-mod}$ that sends a $\phi_g$-Yetter-Drinfeld module $X$ to a $D_{\phi_g}$-module with action
\[
(a \otimes f).x = (fS\iv )(x_\me )ax_\0 \quad a \in H, f \in H^*, x \in X.
\]
We claim that the functor ${\sf F} : \cZ_G(H) \rightarrow {\rm Rep}(D(H,\phi))$ defined as ${\sf F} = \coprod_{g \in G} {\sf F}_g$ is an isomorphism of monoidal categories compatible with the $G$-crossed structures on both categories.\\
We first prove compatibility with the monoidal products by showing that the following diagram of categories and functors commutes for all $g,h \in G$:
\begin{align}
\label{mon_diagram}
\begin{xy}
\xymatrix{
{}^H_H\YD_{\phi_g} \times {}^H_H\YD_{\phi_h} \ar[rrr]^\otimes \ar[d]^{{\sf F}_{g} \times {\sf F}_{h}} & & & {}^H_H\YD_{\phi_{gh}}\ar[d]^{{\sf F}_{gh}}\\
D_{\phi_g}\text{-mod} \times D_{\phi_h}\text{-mod} \ar[rrr]^{\otimes} & & & D_{\phi_{gh}}\text{-mod}
}
\end{xy}
\end{align}
If we start with a $\phi_g$-Yetter-Drinfeld module $X$ and a $\phi_h$-Yetter-Drinfeld module $Y$ and go via the upper right corner of diagram (\ref{mon_diagram}) to the bottom right corner we get 
on $X \otimes Y$ the $D_{\phi_{gh}}$-module structure
\begin{align*}
(a \otimes f ).(x \otimes y) =& (fS\iv ) (\phi_h(x_\me) y_\me) a_\pe x_\0 \otimes a_\pz y_\0\\[3pt]
=& (fS\iv )_\pe(\phi_h(x_\me)) (fS\iv )_\pz(y_\me) a_\pe x_\0 \otimes a_\pz y_\0\\[3pt]
=& (f_\pz S\iv )(\phi_h(x_\me)) a_\pe x_\0 \otimes (f_\pe S\iv )(y_\me) a_\pz y_\0.
\end{align*}
This coincides with the action on $X \otimes Y$ we get by passing over the lower left corner of (\ref{mon_diagram}).\\
Now we show that the functor ${\sf F}$ also is compatible with the crossed structures by showing commutativity of the following diagram for all $g,h \in G$
\begin{align}
\label{cross_diagram}
\begin{xy}
\xymatrix{
{}^H_H\YD_{\phi_g} \ar[rrr]^{\Phi_h } \ar[d]^{{\sf F}_{g}} & & & {}^H_H\YD_{\phi_{hgh\iv}}\ar[d]^{{\sf F}_{hgh\iv}}\\
D_{\phi_g}\text{-mod} \ar[rrr]^{\Psi_h} & & & D_{\phi_{hgh\iv}}\text{-mod}
}
\end{xy}
\end{align}
The way over the upper right corner maps a $\phi_g$-Yetter-Drinfeld module $X$ to the $\phi_{hgh\iv}$-Yetter-Drinfeld module with action
\begin{align}
\label{action_comp_1}
(a \otimes f ).x = (\phi_{h\iv}^*(f) S\iv)(x_\me) \phi_h(a) x 
                 = (f\phi_{h \iv} S\iv )(x_\me ) \phi_h(a) x_\0.
\end{align}
Going via the lower left corner yields the $\phi_{hgh\iv}$-Yetter-Drinfeld module with action
\begin{align}
\label{action_comp_2}
(a \otimes f ).x &= (f S\iv )(\phi_{h \iv}(x_\me )) \phi_h(a) x_\0.
\end{align}
Since bialgebra homomorphisms commute with the antipode of a Hopf-algebra, the right-hand sides of (\ref{action_comp_1}) and (\ref{action_comp_2}) coincide and so diagram (\ref{cross_diagram}) commutes.
\end{proof}
So far we ignored in our discussion the family $R = \menge{R_{g,h}}_{g,h}$ (also called $R$-matrix) which gives the $G$-braiding on the category ${\rm Rep}(D(H,\phi))$.
For Virelizier's Hopf $G$-coalgebra $D(A,B;\sigma,\psi)$ coming from a group homomorphism $\psi : G \rightarrow \Aut_{\rm{Hopf}}(A)$ and non-degenerate Hopf pairing $\sigma : A \times B \rightarrow \k$ the elements
\[
R_{g,h} := \sum_i (e_i \otimes 1_B ) \otimes (1_A \otimes f_i ) \in D(A,B;\sigma,\psi_g) \otimes D(A,B;\sigma,\psi_h),
\]
define an $R$-matrix. Here $\menge{e_i}_i$ and $\menge{f_i}_i$ are vector spaces bases of $A$ resp.\! $B$, such that $\sigma(e_i, f_j) = \delta_{i,j}$.
The inverse of $R_{g,h}$ is shown to be
\[
R_{g,h}^{-1} = \sum_i (S(e_i) \otimes 1_B ) \otimes (1_A \otimes f_i ).
\]
Our modified Hopf $G$-coalgebra $D(H,\phi)$ for the anti-homomorphism needed a modification of the axioms for the crossing.
This also entails an appropriate change of axioms for an $R$-matrix, which we will not make explicit.\\
Instead we describe the $G$-braiding on ${\rm Rep}(D(H,\phi))$ we obtain by pushing the braiding of $\cZ_G(H)$ to ${\rm Rep}(D(H,\phi))$ 
via the inverse functors ${\sf F}^{-1}_g : D_{\phi_g} \rightarrow {}^H_H \YD_{\phi_g}$:\\
Let $\menge{a_i}_i \subset H$ and $\menge{a^i}_i\subset H^*$ be dual bases with respect to the pairing $\ev : H \otimes H^* \rightarrow \k$.
A $D_{\phi_g}$-module $X$ gets mapped to the $\phi_g$-Yetter-Drinfeld module $X$ with coaction
\[
\delta(x) = \sum_i S(a_i) \otimes (1 \otimes a^i).x.
\]
By the definition of the $G$-braiding in $\cZ_G(H)$, the braiding on the $\phi_g$-Yetter-Drinfeld module $X$ and the $\phi_h$-Yetter-Drinfeld module $Y$ is given by the rule
\[
x \otimes y \mapsto \sum_i (S(a_i) \otimes \eps).y \otimes (1 \otimes a^i).x.
\]
If we set $\ov{R}_{g,h} := \sum_i S(a_i) \otimes \eps \otimes 1 \otimes a^i$, we can express the braiding in $\cZ_G(H)$ by
\[
c_{X,Y}(x \otimes y ) = \ov{R}_{g,h}.(y \otimes x).
\]
Since the functor ${\sf F} : \cZ_G(H) \rightarrow {\rm Rep}(D(H,\phi))$ is the identity on morphisms we have that the family 
\[
\ov{R} = \menge{\ov{R}_{g,h}}_{g,h \in G}
\]
has to obey the correct axioms of an $R$-matrix on $D(H,\phi)$. Note that the definition of the $G$-braiding is not exactly as in (\ref{G-Braiding_def_R-matrix}):
Flipping the factors and multiplying with the $R$-matrix is in reversed order.
\bibliographystyle{alpha}
\bibliography{Monoidal-G-center}{}
\end{document}